\theoremstyle{theorem} \newtheorem{thm}{Theorem}
\theoremstyle{theorem} \newtheorem{cor}{Corollary}
\theoremstyle{theorem} \newtheorem{defn}{Definition}
\theoremstyle{theorem} \newtheorem{lemma}{Lemma}
\theoremstyle{theorem} 
\newtheorem*{theorem*}{Theorem}
\newtheorem*{corollary*}{Corollary}
  \newcommand{\R}{\ensuremath{\mathbb{R}}}
  \newcommand{\E}{\ensuremath{\mathbb{E}}}
\newcommand{\V}[1]{\ensuremath{\mathbf{#1}}}
\newcommand{\norm}[1]{\left|\left| #1 \right|\right|}
\newcommand{\NOTE}[1]{ 
\ifx\NOTES\undefined\else
  \footnote{ {\color{blue} NOTE: #1}}  
\fi
}
\newcommand{\aslim}{\stackrel{a.s.}{\rightarrow}}
\newcommand{\Pd}[3]{\ifthenelse{\equal{#3}{1}}{\frac{\partial #1}{\partial #2}}{\frac{\partial^{#3} #1}{\partial #2^{#3}}}}
\newcommand{\n}{n} 
\newcommand{\m}{m} 
\newcommand{\B}{\beta} 
\newcommand{\rk}{r} 
\newcommand{\Mnn}{M_{\n\times\n}}
\newcommand{\Mmn}{M_{\m\times\n}}
\newcommand{\Mmm}{M_{\m\times\m}}
\newcommand{\inner}[2]{\ensuremath{\langle #1\,,\,#2 \rangle}}
\newcommand{\aseq}{\stackrel{a.s.}{=}}
\newcommand{\ct}{\tilde{c}}
\newcommand{\stt}{\tilde{s}}
\newcommand{\ut}{\tilde{u}}
\newcommand{\vt}{\tilde{v}}
\newcommand{\wt}{\tilde{w}}
\newcommand{\Wt}{\tilde{W}}
\newcommand{\Ut}{\tilde{U}}
\newcommand{\Vt}{\tilde{V}}
\newcommand{\thename}{Optimal Shrinkage of Singular Values}
\begin{document}

\title{\thename}

\author{
    Matan Gavish \footnotemark[2]
    \and 
    David L. Donoho \footnotemark[1]
}

\date{}

\maketitle

\renewcommand{\thefootnote}{\fnsymbol{footnote}}
\footnotetext[1]{Department of Statistics, Stanford University}
\footnotetext[2]{School of Computer Science and Engineering, Hebrew University of
Jerusalem}
\renewcommand{\thefootnote}{\arabic{footnote}}

\begin{abstract} 
  We consider recovery of low-rank matrices from noisy data by shrinkage of
  singular values, in which a single, univariate nonlinearity is applied to each
  of the empirical singular values. We adopt an asymptotic framework, in which
  the matrix size is much larger than the rank of the signal matrix to be
  recovered, and the signal-to-noise ratio of the low-rank piece stays constant.
  For a variety of loss functions, including Mean Square Error (MSE -- square
  Frobenius norm), the nuclear norm loss and the operator norm loss, we show
  that in this framework there is a well-defined asymptotic loss that we
  evaluate precisely in each case.  In fact, each of the loss functions we study
  admits a {\em unique admissible} shrinkage nonlinearity dominating all other
  nonlinearities.  We provide a general method for evaluating these optimal
  nonlinearities, and demonstrate our framework by working out simple, explicit
  formulas for the optimal nonlinearities in the Frobenius, nuclear and operator
  norm cases.  For example, for a square low-rank $n$-by-$n$ matrix observed in
  white noise with level $\sigma$, the optimal nonlinearity for MSE loss simply
  shrinks each data singular value $y$ to $\sqrt{y^2-4n\sigma^2 }$ (or to $0$ if
  $y<2\sqrt{n}\sigma$).  This optimal nonlinearity guarantees an asymptotic MSE
  of $2nr\sigma^2$, which compares favorably with optimally tuned hard
  thresholding and optimally tuned soft thresholding, providing guarantees of
  $3nr\sigma^2$ and $6nr\sigma^2$, respectively. Our general method also allows
  one to evaluate optimal shrinkers numerically to arbitrary precision.  As an
  example, we compute optimal shrinkers for the Schatten-$p$ norm loss, for any
  $p>0$.
\end{abstract}
\noindent {\bf Keywords.} Matrix denoising | singular value shrinkage | optimal
shrinkage | spiked model | low-rank matrix estimation | nuclear norm loss |
unique admissible | Schatten norm loss.

\maketitle
\newpage
\tableofcontents
\newpage
\section{Introduction}

Suppose that we are interested in an $m$-by-$n$ matrix $X$, which is thought to be
either exactly or approximately of low rank, but we only observe a single noisy
$m$-by-$n$ matrix $Y$, obeying $Y=X+\sigma Z$; The noise matrix $Z$  has
independent, identically distributed entries with zero mean, unit variance,
and a finite fourth moment. 
We choose a loss function $L_{m,n}(\cdot,\cdot)$ and wish to recover the matrix
$X$ with some bound on the risk $\E L_{m,n}(X,\hat{X})$, where $\hat{X}$ is our
estimated value of $X$. 

For example, when choosing the square Frobenius loss, or mean square error (MSE) 
\begin{eqnarray} \label{L_fro:eq}
  L_{m,n}^{fro}(X,\hat{X}) =
  \norm{X-\hat{X}}_F^2=\sum_{i,j}|X_{i,j}-\hat{X}_{i,j}|^2\,,
\end{eqnarray}
where $X$ and $\hat{X}$  are $m$-by-$n$ matrices,
we would like to find an
estimator $\hat{X}$ with small mean square error (MSE). 
The default technique for estimating a low rank matrix in noise is the {\em Truncated SVD} (TSVD)
\cite{Golub1965}: write
\begin{eqnarray} \label{svd:eq}
  Y = \sum_{i=1}^m y_i \V{v}_{i} \V{\vt}_{i}'
\end{eqnarray}
for the Singular Value Decomposition of the data matrix $Y$, where
$\V{v}_i\in\R^m$ and $\V{\vt}_i\in\R^n$ (for $i=1,\ldots,m$) are the left and right
singular vectors
of $Y$ corresponding to the singular value $y_i$.
The TSVD estimator is 
\begin{eqnarray*}
  \hat{X}_\rk = \sum_{i=1}^\rk y_i \V{v}_i \V{\vt}_i'\,,
\end{eqnarray*}
where $\rk=rank(X)$, assumed known, and $y_1\geq \ldots \geq y_m$.  Being the
best approximation of rank $\rk$ to the data in the least squares sense
\cite{Eckart1936}, and therefore the Maximum Likelihood estimator when $Z$ has
Gaussian entries, the TSVD is arguably as ubiquitous in science and engineering
as linear regression
\cite{Alter2000,Cattell1966,Jackson1993,Lagerlund1997,Price2006,Edfors1998}.

The TSVD estimator shrinks to zero some of the data singular values, while
leaving others untouched.  More generally, for any specific choice of scalar
nonlinearity $\eta:[0,\infty)\to [0,\infty)$, also known as a shrinker, there is
a corresponding {\em singular value shrinkage} estimator $\hat{X}_\eta$ given
by  
\begin{eqnarray} \label{shrink:eq} 
  \hat{X}_\eta = \sum_{i=1}^m \eta(y_i)
    \V{v}_{i} \V{\vt}_{i}'\,.  
\end{eqnarray}

For scalar and vector denoising, univariate shrinkage rules  have proved to be
simple and practical denoising methods, with near-optimal performance guarantees
under various performance measures
\cite{Donoho1994b,Donoho1995,Donoho1995b,Donoho1995c,Donoho1998}.  
Shrinkage makes sense for singular values, too: presumably, the observed
singular values $y_1 \ldots y_m$ are ``inflated'' by the noise, and applying a
carefully chosen shrinkage function, one can obtain a good estimate of the
original signal $X$. 

Singular value shrinkage arises when the estimator $\hat{X}$ for the signal $X$
has to be bi-orthogonally invariant under rotations of the data matrix. The most
general form of an invariant estimator is $\hat{X}=V \hat{D} \tilde{V}'$, where
the matrices $V$ and $\tilde{V}$ contain the left and right singular vectors of
the data, and where $\hat{D}$ is a diagonal matrix that depends on the data
singular values. In other words, the most general invariant estimator is
equivalent to a vector map $\R^m\to\R^m$ acting on the vector of data singular
values. This is a wide and complicated class of estimators; focusing on
univariate singular value shrinkage \eqref{shrink:eq} allows for a simpler
discussion\footnote{However, it is interesting to remark that, at least in the
  Frobenius loss case, and possibly in other cases as well, the asymptotically
  optimal univariate shrinker, presented in this paper, offers the same
  performance asymptotically as the  best possible invariant estimator of the
  form $\hat{X}=V\hat{D}\tilde{V}'$ - see \cite{Donohoa}.}.

Indeed, there is a growing body of literature on matrix denoising by shrinkage
of singular values, going back, to the best of our knowledge, to Owen and Perry
\cite{Owen2009,Perry2009} and Shabalin and Nobel \cite{Shabalin2013}.  Soft
thresholding of singular values has been considered in
\cite{Cai2008,DonohoGavish2013,Donoho2013b}, and hard thresholding in
\cite{Chatterjee2010,Donoho2013b}. In fact, \cite{Perry2009,Shabalin2013} and,
very recently, \cite{Nadakuditi,Josse2016} considered shrinkers that are
developed specifically for singular values, and measured their performance using
Frobenius loss.

These developments suggest the following question:
{\em Is there a simple, natural shrinkage nonlinearity for singular values?} If
there is a simple answer to this question, surely it depends on the loss function $L$ and on
specific assumptions on the signal matrix $X$. 

In \cite{Donoho2013b} we have performed a narrow investigation that focused on
hard and soft thresholding of singular values under the  Frobenius loss
\eqref{L_fro:eq}.  We adopted a simple asymptotic framework that models the
situation where $X$ is low-rank, originally proposed in
\cite{Owen2009,Perry2009,Shabalin2013} and inspired by Johnstone's Spiked
Covariance Model \cite{Johnstone2001}. In this framework, the signal matrix
dimensions $m=m_n$ and $n$ both to infinity, such that their ratio converges to
an asymptotic aspect ratio: $m_n/n\to\beta$, with $0<\beta\leq 1$, while the
column span of the signal matrix remains fixed.  Building on a recent
probabilistic analysis of this framework \cite{Benaych-Georges2012} we have
discovered that, in this framework, there is an {\em asymptotically unique
admissible} threshold for singular values, 
in the sense that it offers equal or
better asymptotic MSE to that of any other threshold choice,
no matter which specific low-rank model
may be in force.

The main discovery reported here is that this phenomenon is in fact much more
general: in this asymptotic framework, which models low-rank matrices observed
in white noise, for each of a  variety of loss functions, there exists a single
{\em asymptotically unique admissible} shrinkage nonlinearity, 
in the sense that
it offers equal or better asymptotic loss than any other shrinkage
nonlinearity, at each specific low-rank model
that can occur.
In other words, once
the loss function has been decided, in a definite asymptotic sense, there is a
single rational choice of shrinkage nonlinearity. 

\subsection*{Some optimal shrinkers} \label{some:subsec}
In this paper, we
develop a general method for finding the optimal shrinkage nonlinearity for a
variety of loss functions. We explicitly work out the optimal shrinkage
formula for the Frobenius norm loss, the nuclear norm loss, and the operator
norm loss. Let us denote the Frobenius, Operator and Nuclear matrix norms by 
$\norm{\cdot}_{F}$,$\norm{\cdot}_{op}$ and $\norm{\cdot}_{*}$, respectively.
If the singular values of the matrix $X-\hat{X}$ are 
$\sigma_1,\ldots,\sigma_m$, then these losses are given by
\begin{eqnarray}
  L_{m,n}^{fro}(X,\hat{X}) &=&   \norm{X-\hat{X}}_F^2 = \sum_{i=1}^m \sigma_i^2 \label{eq:fro}\\
  L_{m,n}^{op}(X,\hat{X}) &=&  \norm{X-\hat{X}}_{op} = \max\left\{ \sigma_1,\ldots,\sigma_m \right\} \label{eq:op}\\
  L_{m,n}^{nuc}(X,\hat{X}) &=& \norm{X-\hat{X}}_* = \sum_{i=1}^m \sigma_i  \label{eq:nuc}
  \,.
\end{eqnarray}

\paragraph{Optimal shrinker for the Frobenius norm loss.} 

As we will see, the optimal nonlinearity for the Frobenius norm loss
\eqref{eq:fro}, in a natural noise scaling, is 
\begin{eqnarray} \label{opt_shrinker_fro:eq}
  \eta^*(y) = 
    \begin{cases}
      \tfrac{1}{y}\sqrt{(y^2-\B-1)^2-4\B} & y \geq 1+\sqrt{\beta} \\
      0 & y \leq 1+\sqrt{\beta}
    \end{cases}\,\,.
\end{eqnarray}
In the asymptotically square case $\B=1$ this reduces to 
\[
    \eta(y) = \sqrt{(y^2-4)_+}\,.
\]

\paragraph{Optimal shrinker for Operator norm loss.}

The operator norm loss \eqref{eq:op} for matrix estimation 
has mostly been studied in the context of covariance estimation
\cite{Bickel2008a,Lam2009,Cai2010a}.
 Let us define
 \begin{eqnarray} \label{x_of_y:eq}
   x(y) = \begin{cases}
     \tfrac{1}{\sqrt{2}}\sqrt{y^2-\B-1 + \sqrt{(y^2-\B-1)^2-4\B} } & y\geq
     1+\sqrt{\beta} \\
    0 & y \leq 1+\sqrt{\beta}
   \end{cases}\,\,.
 \end{eqnarray}

 As we will see, the optimal nonlinearity for operator loss is just
 \begin{eqnarray} \label{opt_shrinker_op:eq}
   \eta^*(y) = x(y)\,.
   \end{eqnarray}

\paragraph{Optimal Shrinkage for Nuclear norm loss.} 

The Nuclear norm loss \eqref{eq:nuc} has also been proposed for matrix
estimation.  
See \cite{Rohde2011,Koltchinskii2011a} and references within for discussion of the Nuclear norm
 and, more generally, of Schatten-$p$ norms as losses for matrix estimation.

 As we will see, the optimal nonlinearity for nuclear norm loss is 

 \begin{eqnarray} \label{opt_shrinker_nuc:eq}
\eta^*(y) = 
    \begin{cases}
      \frac{1}{x^2y}(x^4 - \beta - \sqrt{\beta}xy) & \,\,\,x^4 \geq \beta +\sqrt{\beta}xy 
 \\
 0 &\,\,\,  x^4 < \beta + \sqrt{\beta}xy 
    \end{cases}\,\,.
 \end{eqnarray}
 where $x=x(y)$ is given in \eqref{x_of_y:eq}.
 \\~\\
Note that the formulas above are
 calibrated for the natural noise level $\sigma=1/\sqrt{n}$; see Section \ref{noise:sec} below 
 for usage in known noise level $\sigma$ or unknown noise level. In the code
 supplement for this paper  \cite{Gavish2015} we offer a Matlab implementation 
 of each of these shrinkers in known or unknown noise.

Figure \ref{shrinkers:fig} shows  
the three nonlinearities
\eqref{opt_shrinker_fro:eq}, \eqref{opt_shrinker_op:eq} and
\eqref{opt_shrinker_nuc:eq}. As we will see, these nonlinearities, and many
others that are not calculated explicitly in this paper, 
flow from a single general method for calculating
optimal nonlinearities, developed here.

 \begin{figure}[h!]
   \centerline{
     \includegraphics[height=5.8in]{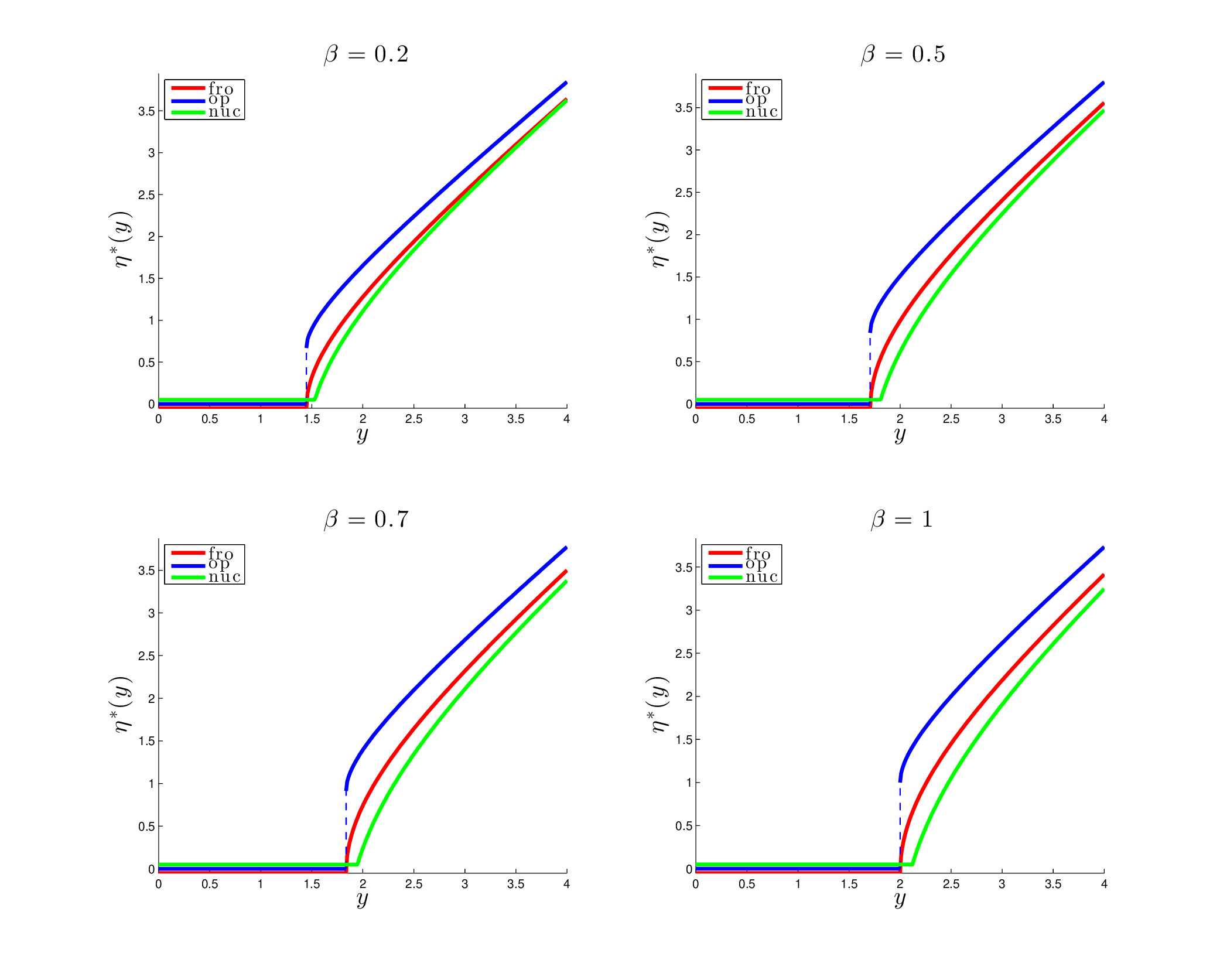}}
   \caption{\small Optimal shrinkers for Frobenius, Operator and Nuclear norm losses
   for different values of $\beta$. All shrinkers 
   asymptote to the identity $\eta(y)=y$ as $y\to\infty$. Curves
 jittered in the vertical axis to avoid overlap. This figure can be reproduced
 using the code supplement \cite{Gavish2015}.}
   \label{shrinkers:fig}
 \end{figure}

 \subsection{Optimal shrinkers vs. hard and soft thresholding}
 \label{hard_and_soft:subsec}

 The optimal shrinkers presented have simple, closed-form formulas. Yet there
 are shrinkage rules that are simpler still, namely, 
 hard and soft thresholding. These nonlinearities are extremely popular 
 for scalar and vector
 denoising, due 
 to their simplicity and various optimality
 properties
 \cite{Donoho1994b,Donoho1995,Donoho1995b,Donoho1995c,Donoho1998}.  
Recall that for $y\geq 0$,
 \begin{eqnarray*}
   \eta^{soft}_s(y) &=&  \max\left( 0, y-s \right) \\
   \eta^{hard}_\lambda(y) &=& y\cdot \mathbf{1}_{y\geq \lambda}\,.
 \end{eqnarray*}
 
It is worthwhile to ask how our optimal shrinkers differ, in shape and
performance, from the popular hard and soft thresholding.  To make a comparison,
one should first decide how to tune the thresholds $\lambda$ and $s$. In our
asymtotic framework, fortunately, there is a decisive answer to the tuning
question: in previous work \cite{Donoho2013b}, we have restricted our attention
to hard and soft thresholding under the Frobenius loss \eqref{eq:fro}. It was
shown that there exist optimal values $\lambda_*(\beta)$ and $s_*(\beta)$, which
are unique admissible in the sense that they offer asymptotic performance equal
to or better than the performance of any other thresold. The optimal thresholds
are given by
\begin{eqnarray*}
  \lambda_*(\beta) &=&  
 \sqrt{2(\beta+1)+\frac{8\beta}{(\beta+1)+\sqrt{\beta^2+14 \beta+1}}}  \\
 s_*(\beta) &=&  1+\sqrt{\beta}\,,
\end{eqnarray*}
where again $\beta$ is the limiting aspect ratio, $m_n/n\to\beta$.

Consider, for example, the square matrix case $\beta=1$. Under the MSE loss,
the optimal hard threshold is then $\lambda_*=4/\sqrt{3}$, and the optimal soft
threshold is $s_*=2$.  Figure 2 shows the nonlinearities
$\eta^{hard}_{\lambda_*}$ and $\eta^{soft}_{s_*}$ against our optimal shrinkers
\eqref{opt_shrinker_fro:eq}, \eqref{opt_shrinker_op:eq} and
\eqref{opt_shrinker_nuc:eq}. In high SNR ($y\gg 1$) the optimal shrinkers agree
with hard thresholding and neither performs any shrinkage, while soft
thresholding shrinks even strong signals.  
As shown in \cite{Donoho2013b}, the worst-case asymptotic MSE over a rank-$r$
matrix observed in noise level $1/\sqrt{n}$ is $2r$ for our optimal shrinker  
\eqref{opt_shrinker_fro:eq}, $3r$ for the optimally tuned hard thresholding
nonlinearity $\eta^{hard}_{\lambda_*}$ and $6r$ for the optimally tuned soft
thresholding nonlinearity $\eta^{soft}_{s_*}$. Hard thresholding is worse in
 intermediate SNR levels; Soft thresholding is worse in strong SNR. 
 For further discussion on this
 phenomenon, which stems from the random rotation of the data 
 singular vectors due to noise, see \cite{Donoho2013b}. We conclude that optimal
 shrinkage, developed in this paper, offers significant performance
 improvement over hard
 and soft thresholding - even when they are optimally tuned.

\begin{figure}[h!]
\centerline{
  \includegraphics[height=1.9in]{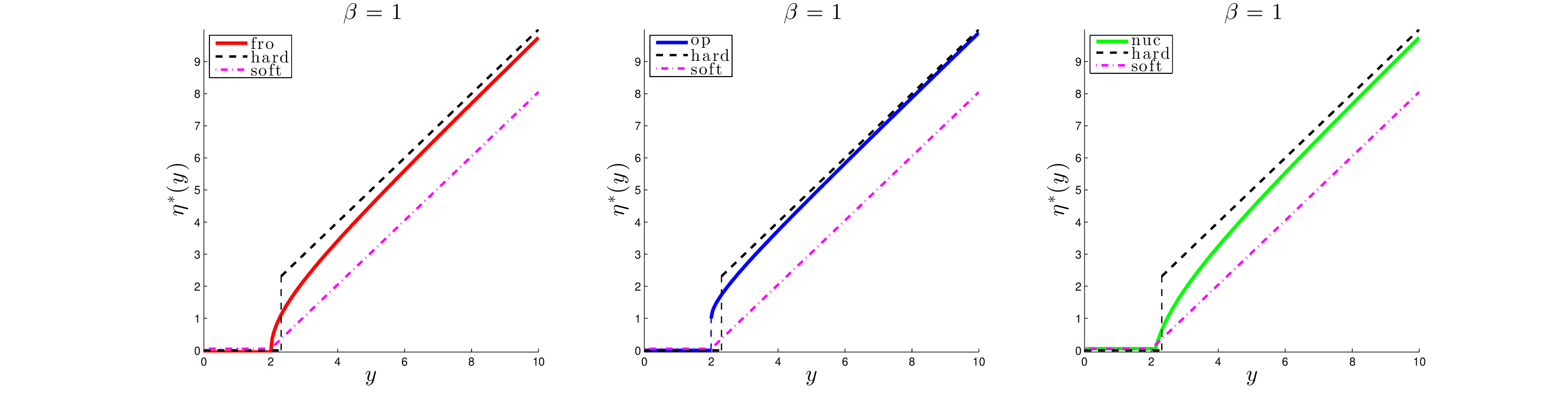}}
   \caption{\small 
     Optimal shrinkers for Frobenius (left), Operator (center)
     and Nuclear norm (right) losses for square matrices ($\beta=1$) 
     plotted with optimally tuned 
     hard and soft thresholding nonlinearities.
     This figure can be reproduced
 using the code supplement \cite{Gavish2015}.}

   \label{shrinkers:fig}
 \end{figure}

\section{Preliminaries}

Column vectors are denoted by boldface lowercase letters, such as $\V{v}$, their
transpose is $\V{v}'$ and their $i$-th coordinate is $v_i$. The Euclidean inner
product and norm on vectors are denoted by $\inner{\V{u}}{\V{v}}$ and
$\norm{\V{u}}_2$, respectively.  Matrices are denotes by uppercase letters, such
as $X$, their transpose is $X'$ and their $i,j$-th entry is $X_{i,j}$.  $\Mmn$
denotes the space of real $m$-by-$n$ matrices,
$\inner{X}{Y}=\sum_{i,j}X_{i,j}Y_{i,j}$ denotes the Hilbert–-Schmidt inner
product and $\norm{X}_F$ denotes the corresponding Frobenius norm on $\Mmn$.
$\norm{X}_*$ and $\norm{X}_{op}$ denote the nuclear norm (sum of singular
values) and operator norm (maximal singular value) of $X$, respectively.  For
simplicity we only consider $m\leq n$.  We denote matrix denoisers, or
estimators, by $\hat{X}:\Mmn\to\Mmn$.  The symbols $\aslim$ and $\aseq$ denote
almost sure convergence and equality of a.s. limits, respectively.
We use ``fat'' SVD of a matrix $X\in\Mmn$ with $m\leq n$, that is, when writing
$X=UD\Ut$ we mean that $U\in\Mmm$, $D\in\Mmn$, and $\Ut\in\Mnn$. 
Symbols without tilde such as $\V{u}$ are associated with left singular
vectors, while symbols with  tilde such as $\V{\ut}$ are associated with right
singular vectors. By $diag(x_1,\ldots,x_m)$ we mean the $m$-by-$n$ matrix whose
main diagonal is $x_1,\ldots,x_m$, with $n$ implicit in the notation and
inferred from context.

\subsection{Natural problem scaling}

In the general model $Y=X+\sigma Z$, the noise level in the singular values of
$Y$ is $\sqrt{n}\sigma$.  Instead of specifying a different shrinkage rule that
depends on the matrix size $n$, we calibrate our shrinkage rules to the
``natural'' model $Y=X+Z/\sqrt{n}$. In this convention, shrinkage rules stay the
same for every value of $n$, and we conveniently abuse notation by writing
$\hat{X}_\eta$ as in \eqref{shrink:eq} for any $\hat{X}_\eta:\Mmn\to\Mmn$,
keeping $m$ and $n$ implicit.  To apply any denoiser $\hat{X}$ below to data
from the general model $Y=X+\sigma Z$, use the denoiser 
\begin{eqnarray} \label{scale-xhat:eq} 
  \hat{X}_\eta^{(n,\sigma)}(Y) = \sqrt{n}\sigma \cdot
  \hat{X}_\eta(Y/\sqrt{n}\sigma)\,.  
\end{eqnarray}

Throughout the text, we use $\hat{X}_\eta$ to denote singular value shrinker calibrated for
noise level $1/\sqrt{n}$. In Section \ref{noise:sec} below we provide a
recipe for applying any denoiser $\hat{X}_\eta$ calibrated for noise level
$\sigma=1/\sqrt{n}$ for data in the presence of unknown noise level.

\subsection{Asymptotic framework and problem statement} \label{framework:subsec}

In this paper, we consider a sequence of increasingly larger denoising problems 
\begin{eqnarray} \label{model:eq}
  Y_n=X_n+Z_n/\sqrt{n}\, 
\end{eqnarray}
with 
$X_n,Z_n\in M_{m_n,n}$, satisfying the
following assumptions:
\begin{enumerate}

\item {\em Invariant white noise:} The entries of $Z_n$ are i.i.d samples from a
distribution with zero mean, unit variance and finite fourth moment.  To
simplify the formal statement of our results, we assume that this distribution
is {\em orthogonally invariant} in the sense that $Z_n$ follows the same
distribution as $A Z_n B$, for every orthogonal $A\in M_{m_n,m_n}$ and $B\in
M_{n,n}$. This is the case, for example, when the entries of $Z_n$ are
Gaussian.  In Section \ref{general_noise:sec}   we revisit this restriction
and discuss general (not necessarily invariant) white noise.

\item {\em Fixed signal column span $(x_1,\ldots,x_r)$:}
Let the rank $\rk>0$ be fixed and choose a vector $\V{x}\in\R^\rk$ with
coordinates $\V{x}=(x_1,\ldots,x_\rk)$ such that $x_1> \ldots > x_r>0$. Assume that for all $n$, 
\begin{eqnarray} \label{singvec:eq}
X_n = U_n \, diag(x_1,\ldots,x_r,0,\ldots,0) \, \Ut_n'\,
\end{eqnarray}
is an arbitrary
 singular value decomposition of $X_n$, where $U_n\in M_{m_n,m_n}$ and 
$\Ut_n\in M_{n,n}$. 

\item {\em Asymptotic aspect ratio $\beta$:}
    The sequence $\m_\n$ is such that $\m_\n / \n \to \beta$. To simplify our
formulas,
we assume that $0< \beta\leq 1$. 

\end{enumerate}

Note that while the signal rank $r$ and nonzero signal singular values 
$x_1,\ldots,x_r$ are shared by all matrices $X_n$, the signal left and right
singular vectors $U_n$ and $V_n$ are unknown and arbitrary.
We also remark that the assumption, whereby the signal singular values are non-degenerate
  ($x_i>x_{i+1}$, $1\leq i < r$), is not
  necessary for our results to hold, yet it simplifies the analysis
  considerably.

\begin{defn} \label{asyloss:def} {\bf Asymptotic Loss.}
  Let $L=\{L_{m,n}\,|\, (m,n)\in\mathbb{N}\times\mathbb{N}\}$ be a family of losses, where each $L_{m,n}:
\Mmn\times \Mmn\to[0,\infty)$ is a loss function obeying
    $L_{m,n}(X,X)=0$. 
   Let $\eta:[0,\infty)\to[0,\infty)$ be a
     nonlinearity and consider 
   $\hat{X}_\eta$, the singular value shrinkage denoiser \eqref{shrink:eq}
   calibrated, as discussed above, for noise level $1/\sqrt{n}$. Let $m_n$ be an
   increasing sequence such that $\lim_{n\to\infty} m_n/n = \beta$, implicit in
   our notation. 
   Define the {\em asymptotic loss} of the shrinker $\eta$  (with respect to
   $L$)
   at the signal $\V{x}=(x_1,\ldots,x_r)$ by 
   \[
     L_\infty(\eta|\V{x}) \aseq \lim_{n\to\infty} L_{m_n,n}\left(X_n\,,\,\hat{X}_\eta(X_n +
     \tfrac{1}{\sqrt{n}}Z_n)\right)\,
   \]
   when the limit exists.
 \end{defn}

Our results imply that the asymptotic loss $L_\infty$ exists and is well-defined, as a function of the signal
singular values $\V{x}$, for a large class of nonlinearities.

\begin{defn} \label{optShrink:def}
  {\bf Optimal Shrinker.}
Let $L$ be a loss family. If a 
shrinker $\eta^*$ has an asymptotic loss that satisfies 
\[
  L_\infty(\eta^*|\V{x}) \leq L_\infty(\eta|\V{x})
\]
for any other 
shrinker $\eta$ in a certain class of shrinkers, any $r\geq 1$ and any $\V{x}\in\R^r$,
then we say that $\eta^*$ is {\em unique asymptotically admissible}
(of simply ``optimal'')
for the loss sequence $L$ and that class of shrinkers.
\end{defn}

\subsection{Our contribution}

At first glance, it seems too much to hope that optimal shrinkers in the sense of Definition
\ref{optShrink:def} even exist. Indeed, existence of an optimal shrinker for
a loss family $L$ implies that, asymptotically, the decision-theoretic picture
is extremely simple and actionable: from the asymptotic loss perspective, there is a single rational choice for shrinker. 

In our current terminology, Shabalin and
Nobel   \cite{Shabalin2013} have effectively shown that an optimal shrinker
exists for Frobenius loss.
The estimator they derive can be shown to be 
equivalent to the optimal shrinker \eqref{opt_shrinker_fro:eq}, yet was given in
a more complicated form. (In Section \ref{fro:sec} we visit the special case of Frobenius
loss in detail, and prove that \eqref{opt_shrinker_fro:eq} is the optimal
shrinker.) 

Our contribution in this paper is as follows.

\begin{enumerate}
  \item 
   We rigorously establish
the existence of an optimal shrinker for a variety of loss families, 
including the popular Frobenius, operator and nuclear norm losses. 

\item 
  We provide a framework for finding the
  optimal shrinkers for a variety of loss families including these popular
  losses. As discussed in Section \ref{noise:sec}, our framework can be applied 
  whether the noise level $\sigma$ is
  known or unknown.

\item 
  We use
  our framework to find simple, explicit formulas for the optimal
  shrinkers for  Frobenius, operator and nuclear norm losses, and show that it
  allows simple numerical evaluation  of optimal shrinkers when  a closed-form
  formula for the optimal shrinker is unavailable.

\end{enumerate}

In the related problem of covariance estimation in the Spiked
Covariance Model, in collaboration with I. Johnstone we identified a similar
phenomenon, namely, existence of optimal {\em eigenvalue shrinkers}
 for covariance estimation \cite{Donohoa}.

\section{The Asymptotic Picture}

In the ``null case'' $X_n \equiv 0$, the empirical distribution of the singular
values of $Y_n=Z_n/\sqrt{n}$  famously converges as $n\to\infty$ to the
generalized quarter-circle
distribution \cite{Anderson2010}, whose density is
\begin{eqnarray} \label{mp:eq}
  f(x) = \frac{\sqrt{4\beta-(x^2-1-\beta)^2}}{\pi \beta
  x}\mathbf{1}_{[1-\sqrt{\beta},1+\sqrt{\beta]}}(x)\,.
\end{eqnarray}
 This 
distribution is compactly supported on $[\beta_-,\beta_+]$, with 
\[
  \beta_\pm = 1\pm \sqrt{\beta}\,.
\]
Moreover, in this null case we have  $y_{n,1}\aslim 1+\sqrt{\beta}$, see 
\cite{Yin1988,Bai1993}.
We say that the singular values of $Y_n$ form a (generalized) quarter circle
{\em bulk} and call $\beta_+$ the {\em bulk edge}.

Expanding seminal results of \cite{Paul2007,Dozier2007} and many other authors,
 Benaych-Georges and Nadakuditi \cite{Benaych-Georges2012} have provided
a thorough analysis of a collection of models, which includes the model
\eqref{model:eq} as a special case. In this section we summarize some of their
results regarding asymptotic behaviour of the model \eqref{model:eq}, which are
relevant to singular value shrinkage.

\begin{figure}[h!]
  \centering
  \includegraphics[height=2.3in]{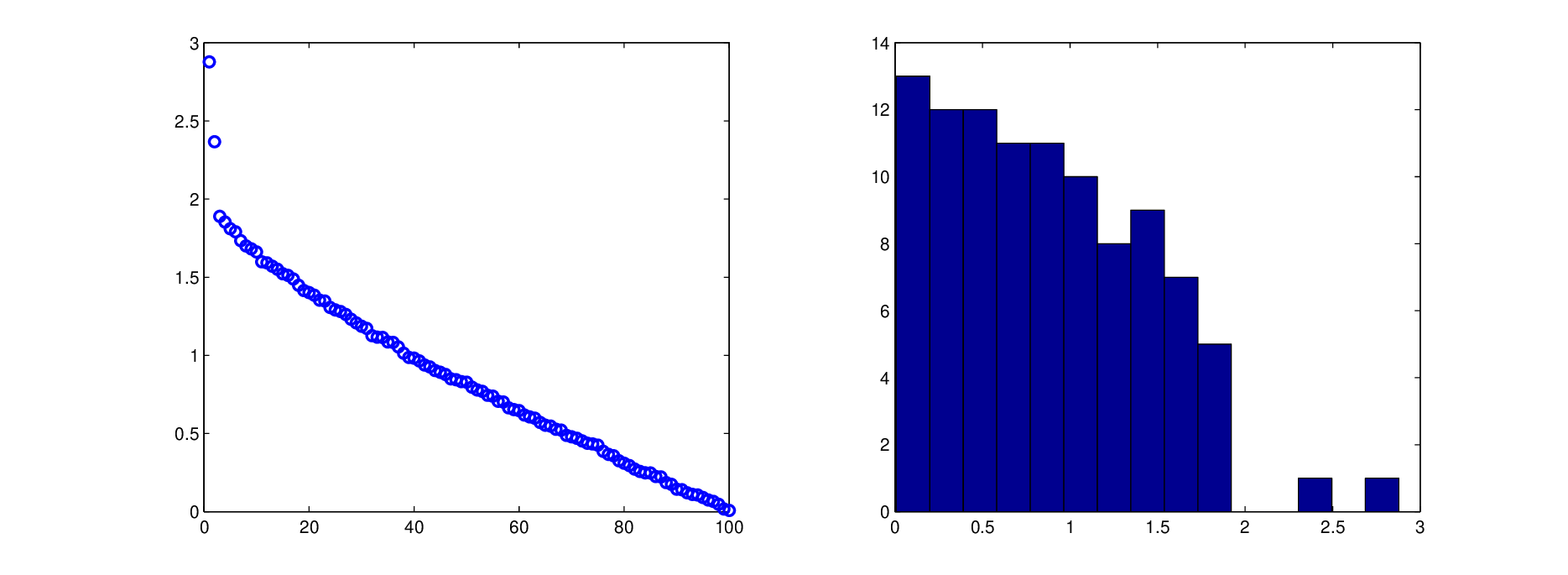}
  \caption{\small Singular values of a data matrix $Y\in M_{100,100}$ drawn from the
  model $Y=X+Z/\sqrt{100}$, with $r=2$ and $\V{x}=(2.5,1.7)$. Left: singular
  values in decreasing order. Right: Histogram of the singular values (note the bulk edge
close to $2$). This figure can be reproduced
 using the code supplement \cite{Gavish2015}.}
  \label{bulk:fig}
\end{figure}

For $x\geq \beta^{1/4}$, define
\begin{eqnarray}
  y(x) &=&  \sqrt{\left(x + \frac{1}{x}\right)\left(x +
  \frac{\beta}{x}\right)}\,,  \label{y:eq} \\
  c(x) &=&  \sqrt{\frac{x^4 - \beta}{x^4 + \beta x^2}} \qquad \text{and} \label{c:eq}\\
  \ct(x) &=&   \sqrt{\frac{x^4 - \beta}{x^4 +  x^2}} \,. \label{ct:eq}
\end{eqnarray}

It turns out that  $y(x)$ from Eq. \eqref{y:eq} is the
asymptotic location of a data singular value corresponding to a signal singular
value $x$, provided $x\geq\beta^{1/4}$. (Note that the function $x(y)$ from
\eqref{x_of_y:eq} is the inverse of $y(x)$ when $x\geq \beta^{1/4}$, and that
$y(\beta^{1/4})=\beta_+$.) Similarly, $c(x)$ from Eq. \eqref{c:eq}
(resp.  $\ct(x)$ from Eq. \eqref{ct:eq}) is the
cosine of the asymptotic angle between the signal left (resp. right) singular
vector and the corresponding data left (resp. right) singular vector, provided
that the corresponding signal singular value $x$ satisfies $x\geq\beta^{1/4}$.

Additional notation is required to state these facts formally. We rewrite the sequence of signal
matrices in our asymptotic framework \eqref{singvec:eq} as
\begin{eqnarray} \label{Xn:eq}
  X_n=\sum_{i=1}^r x_i \, \V{u}_{n,i} \, \V{\ut}_{n,i}'\,,
\end{eqnarray}
 so that $\V{u}_{n,i}\in\R^{m_n}$ (resp.
$\V{\ut}_{n,i}\in\R^n$) is the
left (resp. right) singular vector corresponding to the singular value $x_i$,
namely, 
$i$-th column of $U_n$ (resp. $\Ut_n$) in \eqref{singvec:eq}.
Similarly, let $Y_\n$ be a corresponding sequence of observed matrices in our
framework, and write 
\begin{eqnarray} \label{Yn:eq}
  Y_n = \sum_{i=1}^{m_n} y_{n,i} \,\V{v}_{n,i}\, \V{\vt}_{n,i}'
\end{eqnarray}
so that $\V{v}_{n,i}\in\R^m$ (resp. $\V{\vt}_{n,i}\in\R^n$) is the
left (resp. right) singular vector corresponding to the singular value $y_{n,i}$.
\\~\\
In our notation,  Lemma \ref{y-asy:lem} and Lemma \ref{inner-asy:lem} follow
from  Theorem 2.9 and Theorem 2.10 of \cite{Benaych-Georges2012}:

\begin{lemma} \label{y-asy:lem}
  {\bf Asymptotic location of the top $r$ data singular values.}
  For $1\leq i\leq \rk$,
  \begin{eqnarray} 
    \lim_{\n\to\infty} y_{\n,i} \aseq
    \begin{cases}
    y(x_i) 
    & x_i \geq \beta^{1/4} \\
    \beta_+ & x_i<\beta^{1/4}
  \end{cases}\,.
  \end{eqnarray}
\end{lemma}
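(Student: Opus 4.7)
The plan is to exploit the fact that $Y_n = W_n + X_n$ is a rank-$r$ additive perturbation of $W_n := Z_n/\sqrt{n}$, whose spectral behavior is governed by the Mar\v{c}enko--Pastur (MP) law. Because the law of $Z_n$ is orthogonally invariant, the joint distribution of the singular values of $Y_n$ depends on $X_n$ only through its singular values: replacing $Z_n$ by $U_n' Z_n \Ut_n$ (equal in distribution) lets me assume without loss of generality that $X_n = D_n := diag(x_1,\ldots,x_r,0,\ldots,0)$, so that the signal singular vectors coincide with the standard basis vectors $\V{e}_1,\ldots,\V{e}_r$. This reduction absorbs all randomness into $W_n$ and turns the problem into a clean eigenvalue problem for a deterministic rank-$r$ perturbation.

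Next, I would derive a finite-dimensional ``secular equation'' characterizing the singular values of $Y_n$ that lie outside the MP bulk. Expanding $Y_n Y_n' = W_n W_n' + W_n D_n' + D_n W_n' + D_n D_n'$, the increment $\Delta_n := Y_n Y_n' - W_n W_n'$ has range contained in the $2r$-dimensional subspace spanned by $\V{e}_1,\ldots,\V{e}_r$ and $W_n \V{e}_1,\ldots, W_n \V{e}_r$. For any $y$ separated from the singular values of $W_n$, a Schur-complement / Woodbury identity applied to $\det(Y_n Y_n' - y^2 I)=0$ turns it into the vanishing of the determinant of a fixed-size matrix $M_n(y)$ whose entries are bilinear forms of the type $\V{e}_i' (W_n W_n' - y^2 I)^{-1} \V{e}_j$ and $\V{e}_i' W_n' (W_n W_n' - y^2 I)^{-1} W_n \V{e}_j$.

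The main technical step is to prove that each such bilinear form concentrates almost surely on a deterministic limit expressible via the Stieltjes transform $s_\beta(y^2)$ of the MP law with parameter $\beta$ and its companion transforms. This is the part I expect to be the main obstacle: uniform control of these resolvent entries up to and just outside the bulk edge $\beta_+$ is delicate, since the resolvent blows up there. Under orthogonal invariance it becomes tractable through the factorization $W_n = O_1 \Sigma_n O_2$ with Haar-distributed orthogonal $O_1, O_2$ and $\Sigma_n$ diagonal, after which each bilinear form reduces to one in uniform random unit vectors ($O_1' \V{e}_i$, etc.) and concentrates around its trace by standard Haar concentration, combined with the a.s.\ convergence of the empirical distribution of the singular values of $W_n$ to the (square-root) MP law.

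Granted this step, $M_n(y)$ converges almost surely to a diagonal matrix $M_\infty(y)$ whose $i$-th diagonal entry vanishes precisely when $y = y(x_i)$ for $x_i > \beta^{1/4}$; an explicit computation using the closed form of the MP Stieltjes transform recovers formula \eqref{y:eq}. When $x_i \leq \beta^{1/4}$, the limiting secular equation has no root above $\beta_+$, so at most $\#\{j:x_j > \beta^{1/4}\}$ outliers can persist in the limit; by the Bai--Yin theorem $\sigma_1(W_n) \to \beta_+$ a.s., and since $Y_n$ has only finitely many outliers above any $\beta_+ + \varepsilon$ (their locations being isolated roots of the limiting secular equation), a standard counting argument applied to the ordered top-$r$ singular values of $Y_n$ forces the remaining $y_{n,i}$ (those without corresponding above-threshold $x_i$) to converge to $\beta_+$ a.s., completing the proof.
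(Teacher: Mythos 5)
The paper does not prove Lemma~\ref{y-asy:lem} at all: it is stated with the attribution ``\cite{Benaych-Georges2012} have proved,'' so the paper's own ``proof'' is simply a citation to Benaych-Georges and Nadakuditi. Your proposal is therefore an attempt to reprove, from scratch, a result the authors take as a black box. That said, the route you outline --- reduce to diagonal signal by orthogonal invariance, write the rank-$2r$ perturbation of $W_nW_n'$, reduce $\det(Y_nY_n'-y^2I)=0$ to a $2r\times 2r$ secular determinant whose entries are resolvent bilinear forms, show these concentrate on MP Stieltjes-transform quantities, and solve the limiting equation --- is essentially the proof strategy of the cited Benaych-Georges--Nadakuditi paper, so you have correctly identified the right machinery. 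Your reduction that $\Delta_n=W_nD_n'+D_nW_n'+D_nD_n'$ has range in the $2r$-dimensional span of $\{\V{e}_i\}_{i\le r}$ and $\{W_n\V{e}_i\}_{i\le r}$ is correct, and the Haar-concentration argument for the bilinear forms is the right idea under orthogonal invariance.

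Two places where your sketch, if expanded, would need real care and cannot be waved through. First, the concentration of $\V{e}_i'(W_nW_n'-y^2I)^{-1}\V{e}_j$ must be \emph{uniform} on compact sets of $y$ bounded away from the limiting bulk, and then extended by monotonicity/continuity to characterize the phase transition at $y=\beta_+$, $x=\beta^{1/4}$; ``concentration at fixed $y$ plus a.s.\ convergence of the ESD'' is not by itself enough to conclude that the number of outliers and their ordered limits behave as claimed. Second, your counting argument for the sub-threshold case ($x_i<\beta^{1/4}$) implicitly assumes that the ordered outlier singular values match up with the ordered $x_i$ above threshold; with your non-degeneracy assumption $x_1>\cdots>x_r$ and the monotonicity of $x\mapsto y(x)$ on $[\beta^{1/4},\infty)$ this matching does hold, but it needs to be said explicitly since relabeling is where such arguments typically break. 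Neither is a fatal gap --- both are handled in \cite{Benaych-Georges2012} --- but in a self-contained writeup they are the load-bearing steps.
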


  \begin{lemma} \label{inner-asy:lem}
  {\bf Asymptotic angle between signal and data singular vectors.}
  Let $1\leq i\neq j\leq \rk$ and assume that $x_i\geq \beta^{1/4}$ is
  non-degenerate, namely, the value $x_i$ appears only once in $\V{x}$.
Then
  \begin{eqnarray} \label{proj-u:eq}
  \lim_{\n\to\infty} \big|\langle \V{u}_{n,i}\,,\,\V{v}_{\n,j}\rangle\big| \stackrel{a.s.}{=} 
    \begin{cases}
c(x_i) & i = j \\
      0 & i \neq j
    \end{cases}\,,
  \end{eqnarray}
and
\begin{eqnarray} \label{proj-v:eq}
   \lim_{\n\to\infty} \big|\langle \V{\ut}_{n,i}\,,\,\V{\vt}_{\n,j}\rangle\big| \stackrel{a.s.}{=} 
    \begin{cases}
      \ct(x_i)  & i = j \\
      0 & i \neq j
    \end{cases}\,.
  \end{eqnarray}
  If however $x_i<\beta^{1/4}$, then we have
  \begin{eqnarray*}
    \lim_{\n\to\infty} \big|\langle \V{u}_{n,i}\,,\,\V{v}_{\n,j}\rangle\big| \stackrel{a.s.}{=} 
    \lim_{\n\to\infty} \big|\langle \V{\ut}_{n,i}\,,\,\V{\vt}_{\n,j}\rangle\big| \stackrel{a.s.}{=} 
0\,.
  \end{eqnarray*}
\end{lemma}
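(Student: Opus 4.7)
The plan is to derive this lemma from the general low-rank perturbation machinery of Benaych-Georges and Nadakuditi \cite{Benaych-Georges2012}. Since $X_n$ has rank $r$, I would write $X_n = U_{n,r} D_r \Ut_{n,r}'$ with $D_r = diag(x_1,\ldots,x_r)$ and introduce the two resolvents
\[
R_m(z) = (zI - Z_n Z_n'/n)^{-1}, \qquad R_n(z) = (zI - Z_n'Z_n/n)^{-1}.
\]
An outlier data singular value $y = y_{n,j} > \beta_+$ obeys $Y_n Y_n' \V{v} = y^2 \V{v}$; using the Woodbury identity this collapses, after algebraic manipulation, to a $2r \times 2r$ secular equation, i.e., a determinantal condition on a matrix whose entries are bilinear forms of $R_m(y^2)$, $R_n(y^2)$ and $Z_n R_n(y^2)/\sqrt{n}$ evaluated on the columns of $U_{n,r}$ and $\Ut_{n,r}$.

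The next step is concentration. Because $Z_n$ is orthogonally invariant with finite fourth moment, for any fixed $z > \beta_+^2$ one has almost surely
\[
\inner{\V{u}_{n,i}}{R_m(z) \V{u}_{n,k}} \to \delta_{i,k}\,\varphi(z), \quad \inner{\V{\ut}_{n,i}}{R_n(z) \V{\ut}_{n,k}} \to \delta_{i,k}\,\widetilde{\varphi}(z),
\]
with analogous decay for the cross terms, where $\varphi, \widetilde{\varphi}$ are the Stieltjes transforms of the (generalized) Marchenko--Pastur law at aspect ratio $\beta$. Feeding these deterministic limits into the secular equation reduces it to a scalar condition on $y$, whose root is exactly $y(x_i)$ from \eqref{y:eq}, recovering Lemma \ref{y-asy:lem}.

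For the overlaps in \eqref{proj-u:eq}--\eqref{proj-v:eq}, I would invert the same representation: express the outlier vectors $\V{v}_{n,j}$ and $\V{\vt}_{n,j}$ as normalized linear combinations of $R_m(y_{n,j}^2)\V{u}_{n,k}$ and $Z_n R_n(y_{n,j}^2)\V{\ut}_{n,k}/\sqrt{n}$ for $k = 1,\ldots,r$, with coefficients obtained from the null eigenvector of the secular matrix at eigenvalue $y_{n,j}^2$. Taking the inner product with a fixed $\V{u}_{n,i}$ and applying concentration termwise yields an almost sure limit which, after substituting the Marchenko--Pastur Stieltjes transform at $z = y(x_i)^2$ and simplifying, collapses to $c(x_i) = \sqrt{(x_i^4 - \beta)/(x_i^4 + \beta x_i^2)}$ when $x_j = x_i$ and to $0$ when $x_j \neq x_i$. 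The nondegeneracy hypothesis $x_1 > \cdots > x_r$ enters precisely here, forcing the secular matrix to have simple separated eigenvalues and hence eigenvectors localized in the correct signal coordinate. The right-singular-vector statement \eqref{proj-v:eq} is entirely symmetric and yields $\ct(x_i)$ in place of $c(x_i)$.

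The main obstacle is the subcritical regime $x_i < \beta^{1/4}$. Here no outlier emerges: the $i$-th data singular value is absorbed into the bulk edge $\beta_+$ (cf. \eqref{y_r_plus_1:eq}) and the corresponding data singular vector becomes delocalized with respect to the signal subspace. To prove $|\inner{\V{u}_{n,i}}{\V{v}_{n,j}}| \to 0$ in this regime, one must control the bilinear forms of $R_m(z)$ as $z \downarrow \beta_+^2$, showing that they stay bounded while the normalization of the representation of $\V{v}_{n,j}$ forces the projection onto $\V{u}_{n,i}$ to vanish. This edge analysis, requiring delicate Stieltjes-transform estimates at the spectral edge, is the technical heart of \cite{Benaych-Georges2012} and is where I would expect most of the work to lie.
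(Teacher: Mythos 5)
The paper does not contain its own proof of this lemma. It appears immediately after the sentence ``In this notation, \cite{Benaych-Georges2012} have proved:'' and is stated, together with Lemma~\ref{y-asy:lem}, as a direct quotation of results from Benaych-Georges and Nadakuditi; no argument is given in the text. So there is no ``paper proof'' against which to compare your sketch --- you have outlined a proof of a result the authors simply import.

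As a reconstruction of the Benaych-Georges--Nadakuditi argument, your outline is directionally correct: the linearization/block-resolvent reduction to a low-dimensional secular equation, the almost-sure concentration of resolvent bilinear forms to Marchenko--Pastur-type Stieltjes transforms (which is where the orthogonal invariance of $Z_n$ and the finite fourth moment enter), and the extraction of the squared overlaps from the null eigenvector of the secular matrix (equivalently, from the derivative of the $D$-transform at the outlier location) are the three pillars of that proof. Two clarifications are worth making. First, the nondegeneracy hypothesis is used earlier than you suggest: it is what makes the outlier eigenvalue of the secular system simple, so that the corresponding eigenvector (hence the outlier singular-vector pair of $Y_n$) is asymptotically identified up to sign; without it the overlaps are only determined at the level of projections onto eigenspaces. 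Second, the subcritical case $x_i<\beta^{1/4}$ in BGN is closer to a corollary than a separate edge analysis: once one knows the bilinear form $z\mapsto\langle \V{u}_{n,i}, R_m(z)\V{u}_{n,i}\rangle$ converges uniformly on compact subsets of $(\beta_+^2,\infty)$ and that the putative overlap is a ratio in which the numerator vanishes continuously as the outlier location approaches $\beta_+$, delocalization follows; one does not need a new local law at the edge so much as care in passing to the limit $z\downarrow\beta_+^2$ together with the already-established fact \eqref{y_r_plus_1:eq}. Your estimate that ``most of the work'' lies there somewhat overstates it relative to BGN's actual allocation of effort, but the mathematical content is right.

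If your goal is to match what the paper does, the correct move is to cite \cite{Benaych-Georges2012} for Lemmas~\ref{y-asy:lem} and \ref{inner-asy:lem} rather than re-prove them; if your goal is a self-contained proof, your sketch is a sound starting skeleton but each of the three steps (secular equation, isotropic concentration, overlap formula) would need to be carried out in full, which is essentially the content of Sections 2--4 of that reference.
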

We also note the following fact regarding the data singular values
\cite[proof of Theorem 2.9]{Benaych-Georges2012}:
\begin{lemma} \label{fixedrank:lemma}
  Let $i>r$ be fixed.  Then $y_{n,i}\aslim \beta_+$.
\end{lemma}
\section{Optimal Shrinker for Frobenius Loss} \label{fro:sec}

As an introduction to the more general framework developed below, we first
examine the Frobenius loss case, following the work of Shabalin and Nobel
\cite{Shabalin2013}. Using Definition \ref{asyloss:def}, 
let $L=\left\{ L_{m,n} \right\}$ be the Frobenius loss
family, namely $L_{m,n}$ is given by \eqref{eq:fro}. 

\subsection{Lower bound on asymptotic loss}

Directly expanding the Frobenius matrix norm, we obtain:

\begin{lemma} \label{MSE:lem} {\bf Frobenius loss of singular value shrinkage.}
  For any shrinker $\eta:[0,\infty)\to[0,\infty)$, we have
    \begin{eqnarray}
        \norm{X_n-\hat{X}_\eta(X_n + Z_n/\sqrt{n})}_F^2 &=&  \sum_{i=1}^r 
        \left[ x_i^2 + (\eta(y_{n,i}))^2  \right] \label{MSE1:eq} \\
            &-&
              2\sum_{i,j=1}^r x_i\eta(y_{n,i})
                \inner{\V{u}_{n,i}}{\V{v}_{n,j}}\inner{\V{\ut}_{n,i}}{\V{\vt}_{n,j}}
                \label{MSE2:eq} \\
                &+&  \sum_{i=r+1}^{m_n}
                (\eta(y_{n,i}))^2 \label{residual:eq} 
              \end{eqnarray}
\end{lemma}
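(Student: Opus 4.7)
The plan is a direct algebraic expansion; no probability or asymptotics are needed, since the claimed identity must hold pointwise in $\omega$ for every realization of $Z_n$. Using the SVD \eqref{Xn:eq} for $X_n$ and reading off $\hat{X}_\eta(Y_n) = \sum_{j=1}^{m_n}\eta(y_{n,j})\V{v}_{n,j}\V{\vt}_{n,j}'$ from \eqref{shrink:eq} and \eqref{Yn:eq}, the first move is to polarize
\[
\|X_n-\hat{X}_\eta\|_F^2 \;=\; \|X_n\|_F^2 \;+\; \|\hat{X}_\eta\|_F^2 \;-\; 2\inner{X_n}{\hat{X}_\eta}
\]
and treat the three pieces separately.

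For the two pure squared-norm pieces, orthonormality of the columns of $U_n$ and $\Ut_n$ immediately gives $\|X_n\|_F^2 = \sum_{i=1}^r x_i^2$, and orthonormality of the empirical singular vectors gives $\|\hat{X}_\eta\|_F^2 = \sum_{i=1}^{m_n}\eta(y_{n,i})^2$. Splitting the latter sum at $i = r$ yields exactly the $\eta(y_{n,i})^2$ contribution to \eqref{MSE1:eq} together with the residual term \eqref{residual:eq}.

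For the cross term I would invoke the elementary rank-one identity
\[
\inner{\V{a}\V{b}'}{\V{c}\V{d}'} \;=\; \mathrm{tr}(\V{b}\V{a}'\V{c}\V{d}') \;=\; \inner{\V{a}}{\V{c}}\,\inner{\V{b}}{\V{d}},
\]
which after substitution converts $\inner{X_n}{\hat{X}_\eta}$ into a double sum in $(i,j)$ of $x_i\,\eta(y_{n,j})\,\inner{\V{u}_{n,i}}{\V{v}_{n,j}}\inner{\V{\ut}_{n,i}}{\V{\vt}_{n,j}}$, matching the form of the cross-term line \eqref{MSE2:eq}.

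The entire argument is essentially a matter of unwinding inner products, so there is no substantive obstacle. The only care lies in bookkeeping of index ranges — in particular, keeping the signal SVD (indices $1,\ldots,r$) distinct from the data SVD (indices $1,\ldots,m_n$) throughout the expansion, and tracking how the $j > r$ contributions regroup into \eqref{residual:eq}.
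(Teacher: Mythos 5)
Your polarization decomposition is the same route the paper takes (it simply says ``directly expanding the Frobenius matrix norm''), and your computations of $\norm{X_n}_F^2$, of $\norm{\hat{X}_\eta}_F^2$, and the rank-one inner-product identity are all correct. The flaw is in your final sentence. The cross term you actually derive is
\[
\inner{X_n}{\hat{X}_\eta} \;=\; \sum_{i=1}^r \sum_{j=1}^{m_n} x_i\,\eta(y_{n,j})\,\inner{\V{u}_{n,i}}{\V{v}_{n,j}}\,\inner{\V{\ut}_{n,i}}{\V{\vt}_{n,j}}\,,
\]
with $j$ running all the way to $m_n$ and $\eta$ evaluated at $y_{n,j}$, indexed with the data singular vector it multiplies, not with the signal vector. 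This does \emph{not} ``match the form of'' \eqref{MSE2:eq}, where $j$ is capped at $r$ and the argument of $\eta$ is written $y_{n,i}$; and the $j>r$ contributions have no way to ``regroup into \eqref{residual:eq}'': the residual \eqref{residual:eq} is already fully accounted for by splitting $\norm{\hat{X}_\eta}_F^2$ at $i=r$, and a mixed term $x_i\,\eta(y_{n,j})\inner{\V{u}_{n,i}}{\V{v}_{n,j}}\inner{\V{\ut}_{n,i}}{\V{\vt}_{n,j}}$ is not of the form $\eta(y_{n,j})^2$.

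Since $\inner{\V{u}_{n,i}}{\V{v}_{n,j}}$ with $i\le r<j$ is generically nonzero at finite $n$, your exact identity carries genuinely extra terms relative to the printed statement of Lemma~\ref{MSE:lem}; the printed version appears to have a misprint and your expansion is the correct exact one. The discrepancy is asymptotically harmless: for a proper shrinker $\eta(y_{n,j})\to0$ for $j>r$, Lemma~\ref{residual:lem} gives $\sum_{j>r}\eta(y_{n,j})^2\to0$, and Cauchy--Schwarz then kills the extra cross terms, so Corollary~\ref{lower_bound_MSE:cor} and the downstream asymptotic bound are unaffected. But you should state the exact expansion with $j$ running to $m_n$ and then argue separately that the $j>r$ cross contributions vanish in the limit, rather than asserting that they merge into \eqref{residual:eq}, which is simply false.
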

~
\\
This implies a lower bound on Frobenius loss of any singular value shrinker: 

\begin{cor} \label{lower_bound_MSE:cor}
  For any shrinker $\eta:[0,\infty)\to[0,\infty)$, we have
    \begin{eqnarray*}
        \norm{X_n-\hat{X}_\eta(X_n + Z_n/\sqrt{n})}_F^2 &\geq&  \sum_{i=1}^r 
        \left[ x_i^2 + (\eta(y_{n,i}))^2  \right] \label{MSE:eq}\\ 
          &-&
              2\sum_{i,j=1}^r x_i\eta(y_{n,i})
                \inner{\V{u}_{n,i}}{\V{v}_{n,j}}\inner{\V{\ut}_{n,i}}{\V{\vt}_{n,j}}\,.
              \end{eqnarray*}
\end{cor}

As $n\to\infty$, this lower bound on the Frobenius loss is governed by 
three quantities: the asymptotic location of data singular value
$y_{n,i}$, the asymptotic angle between the left signal singular vectors 
and left data singular vector
$\inner{\V{u}_{n,i}}{\V{v}_{n,i}}$, and asymptotic angle between the right
signal singular vectors and right data singular vector
$\inner{\V{\tilde{u}}_{n,i}}{\V{\tilde{v}}_{n,i}}$
(see also \cite{Donoho2013b}).

Combining  Corollary \ref{lower_bound_MSE:cor}, Lemma \ref{y-asy:lem}
and Lemma \ref{inner-asy:lem} we obtain a lower bound for the asymptotic  Frobenius
loss (see \cite{Shabalin2013}):

\begin{cor} \label{lowerbound:cor}
 For any 
 continuous 
 shrinker $\eta:[0,\infty)\to[0,\infty)$, we have
   \begin{eqnarray*}
     L_\infty(\eta|\V{x}) \aseq \lim_{n\to\infty} 
     \norm{X_n-\hat{X}_\eta(X_n + Z_n/\sqrt{n})}_F^2 \geq  
     \sum_{i=1}^r  L_{2,2}(\eta|x_i)
              \end{eqnarray*}
where
\begin{eqnarray} \label{L22_surprise:eq}
  L_{2,2}(\eta|x) =  x^2 + \eta^2 
          - 2 x \eta c(x)\ct(x)\,.
        \end{eqnarray}
and $\eta = \eta(y)$.
\end{cor}
\noindent
The notation $L_{2,2}$  in \eqref{L22_surprise:eq} will be made apparent below,
see \eqref{L22_fro:eq}.

\subsection{Optimal shrinker matching the lower bound}

By differentiating the asymptotic lower bound w.r.t $\eta$, we find that
$L_\infty(\eta|\V{x}) \geq \sum_{i=1}^r L_{2,2}(\eta^*|x_i)$, where 
$\eta^*(y(x)) = xc(x)\ct(x)$.
Expanding $c(x)$ and $\ct(x)$ from Eqs. \eqref{c:eq} and \eqref{ct:eq}, we
find that $\eta^*(y)$ is given by \eqref{opt_shrinker_fro:eq}. 

The singular value shrinker  $\eta^*$, for which $\hat{X}_{\eta^*}$ minimizes the
asymptotic lower bound, thus becomes a natural
candidate for the optimal shrinker for Frobenius loss. 
Indeed, by definition, for $\hat{X}_{\eta^*}$ the limits of \eqref{MSE1:eq} and \eqref{MSE2:eq}
are the smallest possible. It remains to show that the limit of
\eqref{residual:eq} is the smallest possible. 

It is clear from \eqref{residual:eq} that a necessary condition for a shrinker $\eta$ to be
successful, let alone optimal, is that it must set to zero data eigenvalues that do not correspond to
signal. 
With \eqref{residual:eq} in mind, 
we should only consider shrinkers $\eta$ for which $\eta(y)=0$ for any $y\leq
\beta_+$.  
The following is a sufficient condition for a shrinker to achieve the
lowest limit possible in the term \eqref{residual:eq}, namely, for this term to 
converge to zero.

\begin{defn} \label{conservative:def}
  Assume that a continuous shrinker  $\eta:[0,\infty)\to[0,\infty)$  
satisfies $\eta(y)=0$ whenever $y\leq\beta_++\varepsilon$ \,for some 
fixed $\varepsilon>0$.
    We say that $\eta$ is a  {\em Conservative shrinker}.
\end{defn}

By Lemma
\ref{y-asy:lem} and Lemma \ref{inner-asy:lem}, it is clear that 
conservative shrinkers set to zero all data singular
values $\left\{ y_i \right\}$ which originate from pure noise ($x_i=0$), as well as 
all data singular values $\left\{ y_i \right\}$ which are ``engulfed'' in the
noise bulk, rendering their corresponding singular vectors useless 
($x_i < 
\beta^{1/4}$). Conservative shrinkers are so called since they 
leave a (possibly infinitesimally small)
safety margin $\varepsilon$. They enjoy the following key property:

\begin{lemma} \label{residual:lem}
  Let  $\eta:[0,\infty)\to[0,\infty)$ be a conservative shrinker.
     Then 
    \[
       \sum_{i=r+1}^{m_n}
                (\eta(y_{n,i}))^2 \aslim 0\,.
    \]
\end{lemma}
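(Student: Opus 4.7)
The plan is to leverage three facts: $\eta$ annihilates $[0,\beta_+]$, $\eta$ is continuous at $\beta_+$ with $\eta(\beta_+)=0$, and by \eqref{y_r_plus_1:eq} we have $y_{n,r+1}\to\beta_+$ almost surely. Together these force every surviving summand $\eta(y_{n,i})^2$ with $i\geq r+1$ to be small; the difficulty is that the number of summands $m_n-r$ diverges.

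First I would fix $\epsilon>0$ and invoke continuity of $\eta$ at $\beta_+$ to pick $\delta>0$ with $\eta(y)^2<\epsilon$ on $[\beta_+,\beta_++\delta]$. By \eqref{y_r_plus_1:eq}, almost surely $y_{n,r+1}<\beta_++\delta$ for all sufficiently large $n$, and by the ordering of singular values every $y_{n,i}$ with $i\geq r+1$ then lies in $[0,\beta_++\delta]$. Indices with $y_{n,i}\leq\beta_+$ contribute $0$ (the collapse-the-bulk property); the remaining indices each contribute less than $\epsilon$.

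The naive estimate $\sum\leq\epsilon\cdot(m_n-r)$ diverges, so the main obstacle is to control the number $N_n:=|\{i\geq r+1: y_{n,i}>\beta_+\}|$ of truly contributing indices. To do so, I would apply Weyl's inequality for the singular values of the rank-$r$ additive perturbation $Y_n=X_n+Z_n/\sqrt n$ to obtain $y_{n,k}\leq\lambda_{n,k-r}$ for $k\geq r+1$, where $\lambda_{n,1}\geq\lambda_{n,2}\geq\cdots$ denote the ordered singular values of the pure noise matrix $Z_n/\sqrt n$. Replacing $\eta$ by its non-decreasing upper envelope $\bar\eta(y):=\sup_{z\leq y}\eta(z)$ --- which remains continuous, vanishes on $[0,\beta_+]$, and is bounded by $Cy$ --- this yields
\[
  \sum_{i=r+1}^{m_n}\eta(y_{n,i})^2 \leq \sum_{j=1}^{m_n-r}\bar\eta(\lambda_{n,j})^2,
\]
reducing the claim to the null case $X_n\equiv 0$.

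Finally, in the null case, I would close the argument using edge concentration for Wishart ensembles: almost surely only $O(n^{o(1)})$ of the $\lambda_{n,j}$ exceed $\beta_+$, and each such excess is of order $n^{-2/3+o(1)}$. Combined with the continuity modulus of $\bar\eta$ at $\beta_+$, each surviving contribution vanishes and the number of contributors remains negligible, giving $\sum_j\bar\eta(\lambda_{n,j})^2\to 0$ almost surely. Letting $\epsilon\downarrow 0$ finishes the proof. The hardest step is this final edge-concentration estimate: weak convergence of the empirical spectral distribution to the quarter-circle law only shows that the \emph{average} $\bar\eta^2$ vanishes, which is insufficient to bound the unnormalized sum; one genuinely needs Tracy--Widom--type control of the extremal bulk singular values.
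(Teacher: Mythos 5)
Your reduction to the pure-noise case follows the same route as the paper: you use Weyl's inequality for singular values of an additive rank-$r$ perturbation, while the paper projects out the $r$ signal rows and invokes the Cauchy interlacing theorem; these are equivalent here, and the paper in fact writes out only $r=1$ whereas your Weyl argument handles general $r$ cleanly. Your nondecreasing envelope $\bar\eta$ also plays the same role as the paper's majorization $\eta(y)^2\le C^2 y^2\,\mathbf 1_{[\beta_+,\infty)}(y)$: both supply the monotone upper bound needed to transfer the ordered singular-value comparison to a comparison of sums.

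The proofs genuinely diverge at the final step, and there you are being \emph{more} careful than the paper. The paper's last sentence asserts that a.s. weak convergence of the empirical spectral distribution of $\{\bar y_{n,i}^2\}$ to the Marchenko--Pastur law forces $\sum_i \bar y_{n,i}^2\,\mathbf 1_{[\beta_+,\infty)}(\bar y_{n,i})\to 0$, and it even slips from an a.s.\ limit to an expectation. But ESD convergence is a statement about the normalized measure $\tfrac{1}{m_n}\sum_i\delta_{\bar y_{n,i}^2}$ and only gives the unnormalized sum as $o(m_n)$, not $o(1)$. Moreover, the crude majorant $C^2y^2\mathbf 1_{y\ge\beta_+}$ discards the fact that $\eta$ is continuous and vanishes at $\beta_+$, which is exactly what keeps the surviving terms small when noise singular values fluctuate past the edge; with only that majorant, one would need zero singular values to exceed $\beta_+$, which is false. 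Your insistence on quantitative edge control --- at most $n^{o(1)}$ noise singular values exceed $\beta_+$, each by $O(n^{-2/3+o(1)})$ --- combined with the modulus of continuity of $\eta$ (or $\bar\eta$) at $\beta_+$ is precisely the missing ingredient, and keeping $\eta$ itself rather than only $Cy$ in the final bound (as you do) is essential so that continuity at $\beta_+$ actually enters. In short, your plan is the same architecture but with a correct closing step, and it exposes a real gap in the published sketch.
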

\begin{proof}
  By Lemma \ref{fixedrank:lemma} we have $y_{n,r+1}\aslim \beta_+$. Let $N$ be
  the
  (random) index such that $y_{n,r+1}<\beta_+ +\varepsilon$ for all $n>N$. 
  Then for all $n>N$ and all $i>r$ we have $y_{n,i}<\beta_+ + \varepsilon$,
  hence $\eta(y_{n,i})=0$. The desired almost sure convergence follows.
\end{proof}

Ironically, careful inspection of the candidate \eqref{opt_shrinker_fro:eq} reveals that it is 
continuous yet
not strictly conservative: it only satisfies $\eta(y)=0$ for $y\leq \beta_+$,
leaving no margin above the bulk edge $\beta_+$.
In fact, building on Lipschitz continuity of the Frobenius
loss itself, it can be shown 
that Lemma \ref{residual:lem}  remains true for the shrinker
\eqref{opt_shrinker_fro:eq}  as well \cite{Donohoa}; this is however outside our 
present scope. Consequently, the asymptotic loss of \eqref{opt_shrinker_fro:eq} 
matches the
lower bound from Corollary \ref{lowerbound:cor}, and is lower than the
asymptotic loss of any other continuous shrinker,  for any low-rank model 
$(x_1,\ldots,x_r)$.

\section{A Framework for Finding Optimal Shrinkers} \label{framework:sec}

With the previous section in mind, 
our main result may be summarized as follows:  the basic ingredients that enabled us
to find the optimal shrinker for Frobenius loss allow us 
to find the optimal shrinker for each of a  variety of loss families.
For these loss families, an optimal shrinker exists and is given by a simple
formula. To avoid some technical nuisance, we focus on finding the optimal
shrinker among conservative shrinkers.

To get started, let us describe the loss families to which our method applies.

\begin{defn} {\bf Orthogonally invariant loss.}
  A loss $L_{m,n}(\cdot,\cdot)$ is {\em orthogonally invariant} if for all
  $m,n$ we have
  $L_{m,n}(A,B) = L_{m,n}(UAV,UBV) $, for any orthogonal $U\in O_m$ and $V\in O_n$.
\end{defn}

\begin{defn} {\bf Decomposable loss family.}
  Let $A,B\in\Mmn$ and let $m=\sum_{i=1}^k m_i$ and $n=\sum_{i=1}^k n_i$. 
Assume that there are matrices $A_i,B_i\in M_{m_i,n_i}$, $1\leq i\leq k$, such
that
 \[A=\oplus_i A_i \qquad \qquad B=\oplus_i B_i\] 
 in the sense that $A$ and $B$ are block-diagonal with blocks $\left\{ A_i
  \right\}$ and  $\left\{ B_i \right\}$, respectively.
  A loss family $L=\{L_{m,n}\}$ is {\em sum-decomposable} if, for all $m,n$ and
   $A,B$ with block diagonal structure as above,
   \[L_{m,n}(A,B)=\sum_i L_{m_i,n_i}(A_i,B_i)\,.\] 
Similarly, it is {\em max-decomposable} if   \[L_{m,n}(A,B)=\max_i
L_{m_i,n_i}(A_i,B_i)\,.\] 
\end{defn}

\paragraph{Examples.} 
As primary examples, we consider loss families defined in Section
\ref{some:subsec}: The Frobenius norm loss $L^{fro}$, the operator norm loss
$L^{op}$ and the nuclear norm loss $L^{nuc}$. 
It is easy to check that (i) each of these losses are orthogonally invariant, 
and (ii) the families $L^{fro}$ and $L^{nuc}$ are sum-decomposable, while the family
$L^{op}$ is max-decomposable.
\\
~
\\
Our framework for finding optimal shrinkers can now be stated as  follows.

\begin{thm}  \label{char:thm}
  {\bf Characterization of the optimal singular value shrinker.}
 Let 
\begin{eqnarray}
A(x) &=&   \begin{bmatrix} x & 0 \\ 0 & 0 \end{bmatrix} \\
 B(\eta,x) &=&  
\eta \begin{bmatrix}  c(x)\,\ct(x)  &
  c(x)\,\stt(x)  \\
  \ct(x)\, s(x) & s(x)\,\stt(x) 
\end{bmatrix}\,, \label{B:eq}
\end{eqnarray}
where $c(x)$ and $\ct(x)$ are given by Eqs. \eqref{c:eq} and \eqref{ct:eq}, and
where $s(x)=\sqrt{1-c^2(x)}$ and $\stt(x)=\sqrt{1-\ct^2(x)}$.
Assume that $L=\left\{ L_{m,n} \right\}$ is a sum- or max- decomposable family
of orthogonally invariant losses. %
Define 
\begin{eqnarray}\label{F:eq}
  F(\eta,x) = L_{2,2}(A(x),B(\eta,x))
\end{eqnarray}
and suppose that for any $x\geq\beta^{1/4}$ there exists a unique minimizer
\begin{eqnarray} \label{Fmin:eq}
  \eta^{**}(x) = argmin_{\eta\geq0} F(\eta,x)\,,
\end{eqnarray}
such that $\eta^{**}$ is a conservative shrinker on $[\beta^{1/4},\infty)$.
Further suppose that there exists a point $x_0 \geq \beta^{1/4}$ such that 
\[
  F(\eta^{**}(x),x) \geq L_{1,1}(x,0)\qquad\qquad \beta^{1/4}\leq x\leq x_0\,,
\]
with
\begin{eqnarray} \label{cross:eq}
  F(\eta^{**}(x_0),x_0) = L_{1,1}(x_0,0)\,.
\end{eqnarray}
Define the shrinker 
\begin{eqnarray} \label{concat:eq}
  \eta^*(y) = 
  \begin{cases}
    \eta^{**}(x(y)) & y(x_0)\leq y  \\
    0 & 0\leq y < y(x_0)
\end{cases}\,,
\end{eqnarray}
where $x(y)$ is defined in Eq.
\eqref{x_of_y:eq}. 
Then for any conservative shrinker $\eta$, 
the asymptotic losses $L_\infty(\eta^*|\cdot)$ and $L_\infty(\eta|\cdot)$ exist,
and
\[
  L_\infty(\eta^*|\V{x})\leq L_\infty(\eta|\V{x})
\]
for all $r\geq 1$ and all $\V{x}\in\R^r$.
\end{thm}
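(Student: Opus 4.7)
The plan is to reduce the asymptotic loss $L_\infty(\eta | \V{x})$ to an aggregate of $r$ independent $2 \times 2$ pieces, one per signal singular value, and then minimize pointwise. By the orthogonal invariance of the noise distribution and of the loss, I may assume without loss of generality that $X_n = \mathrm{diag}(x_1, \ldots, x_r, 0, \ldots, 0)$ with left and right signal singular vectors $\V{u}_{n,i} = \V{e}_i$ and $\V{\ut}_{n,i} = \tilde{\V{e}}_i$; Lemmas \ref{y-asy:lem} and \ref{inner-asy:lem} then supply almost-sure control of $y_{n,i}$ and of the angles between the signal and data singular vectors for each $i$ with $x_i \geq \beta^{1/4}$.

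For each such $i$, let $V_{n,i}^L \subset \R^{m_n}$ be the plane spanned by $\V{e}_i$ and the component of $\V{v}_{n,i}$ orthogonal to $\V{e}_i$, and similarly $V_{n,i}^R \subset \R^n$ from $\tilde{\V{e}}_i$ and $\V{\vt}_{n,i}$. Restricted to the $V_{n,i}^L$ rows and $V_{n,i}^R$ columns, the rank-one summand $x_i \V{e}_i \tilde{\V{e}}_i'$ of $X_n$ has matrix $A(x_i)$, and the summand $\eta(y_{n,i}) \V{v}_{n,i} \V{\vt}_{n,i}'$ of $\hat{X}_\eta(Y_n)$ has matrix whose four entries are exactly the products appearing in \eqref{B:eq}, with $c, \ct, s, \stt$ evaluated at $x_i$ and $\eta$ evaluated at $y_{n,i}$. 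By Lemma \ref{inner-asy:lem}, the planes $\{V_{n,i}^L\}_i$ are asymptotically mutually orthogonal (and likewise on the right), so after a further orthogonal change of basis that fixes $X_n$, the pair $(X_n, \hat{X}_\eta(Y_n))$ is asymptotically block-diagonal: $r$ blocks of size $2\times 2$ with entries converging almost surely to those of $A(x_i)$ and $B(\eta(y(x_i)), x_i)$, plus a residual block of size $(m_n - 2r)\times(n - 2r)$ carrying the bulk of $\hat{X}_\eta(Y_n)$ against the zero block of $X_n$.

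Applying sum- or max-decomposability to this block-diagonal surrogate gives
\[
  L_{m_n,n}(X_n, \hat{X}_\eta(Y_n)) \;\approx\; \bigoplus_{i=1}^r L_{2,2}\!\left(A(x_i), B(\eta(y_{n,i}), x_i)\right) \;\bigoplus\; L\!\left(0,\, \text{bulk shrinker block}\right),
\]
where $\bigoplus$ denotes $\sum$ or $\max$. Regularity of $L$ lets me replace the asymptotically block-diagonal matrices by exactly block-diagonal ones with vanishing error; a straightforward adaptation of Lemma \ref{residual:lem} (using that $\eta$ is proper and hence vanishes on $[0,\beta_+]$) drives the bulk term to $0$; and continuity of $\eta$ together with $y_{n,i} \aslim y(x_i)$ makes the first $r$ terms converge almost surely to $F(\eta(y(x_i)), x_i)$. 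For $x_i < \beta^{1/4}$ the corresponding data singular value enters the bulk and is suppressed by the proper shrinker, so that contribution collapses to $L_{1,1}(x_i, 0)$. One obtains
\[
  L_\infty(\eta | \V{x}) \;\aseq\; \bigoplus_{i=1}^r G(\eta, x_i), \qquad G(\eta, x) = \begin{cases} F(\eta(y(x)), x) & x \geq \beta^{1/4}, \\ L_{1,1}(x, 0) & x < \beta^{1/4}. \end{cases}
\]

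Pointwise minimization now finishes the proof. For $x_i \geq x_0$, $\eta^{**}(x_i)$ is the unique minimizer of $F(\cdot, x_i)$ and $\eta^*(y(x_i)) = \eta^{**}(x_i)$ by \eqref{concat:eq}. For $\beta^{1/4} \leq x_i < x_0$, decomposability applied to the block-diagonal pair $(A(x), 0)$ yields $F(0, x) = L_{1,1}(x,0)$; combined with the threshold hypothesis $F(\eta^{**}(x), x) \geq L_{1,1}(x, 0)$ it follows that $F(\eta, x) \geq F(0, x)$ for all $\eta \geq 0$, so the optimal scalar is $0$, which $\eta^*$ delivers on $[0, y(x_0))$. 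For $x_i < \beta^{1/4}$ every proper shrinker contributes the same $L_{1,1}(x_i, 0)$. Hence $L_\infty(\eta^* | \V{x}) \leq L_\infty(\eta | \V{x})$ for every proper shrinker $\eta$. The main obstacle is the rigorous justification of the block-diagonal reduction: the spike singular vectors $\V{v}_{n,i}, \V{\vt}_{n,i}$ are coupled to the bulk singular vectors through the SVD of $Y_n$, so quantitative rates in Lemma \ref{inner-asy:lem}, together with the Lipschitz-in-$L^\alpha$ control from the regularity hypothesis, are required to convert ``asymptotically block-diagonal'' into a usable pre-limit approximation whose loss differs from that of the surrogate by an almost-surely vanishing amount.
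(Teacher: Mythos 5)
Your proposal follows essentially the same route as the paper: reduce the loss to $r$ decoupled $2\times 2$ ``atoms'' via a common (near-)basis built from the signal and data singular vectors, pass to the almost-sure limit in each atom using Lemmas \ref{y-asy:lem} and \ref{inner-asy:lem}, kill the bulk contribution via a variant of Lemma \ref{residual:lem}, and then minimize pointwise in $x$. This is exactly the architecture encoded in the paper's Lemmas \ref{block1:lem}, \ref{block_conv:lem}, \ref{char2:lem}. One cosmetic difference: the paper passes to the auxiliary \emph{truncated} shrinker $\tilde{X}_\eta(Y_n)=\sum_{i\le r}\eta(y_{n,i})\V{v}_{n,i}\V{\vt}_{n,i}'$, for which the block decomposition is exact at finite $n$ (at least in the rank-$1$ case), and only then transfers to $\hat{X}_\eta$ via the Frobenius bound $\|\hat X_\eta-\tilde X_\eta\|_F^2=\sum_{i>r}\eta(y_{n,i})^2\to 0$ together with the Lipschitz-in-$L^\alpha$ regularity. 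You instead work directly with an ``asymptotically block-diagonal'' pair and invoke regularity to close the gap; this is in the same spirit, and your worry about needing \emph{quantitative} rates from Lemma \ref{inner-asy:lem} is overcautious --- almost-sure entrywise convergence plus the regularity hypothesis is enough --- but the cleaner way to organize it really is the paper's truncated-estimator device.

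There is one genuine gap. Your asymptotic-loss formula
\[
  L_\infty(\eta\,|\,\V{x}) \aseq \bigoplus_{i=1}^r G(\eta,x_i)
\]
is derived under the hypothesis that $\eta$ is a \emph{proper} shrinker, in particular continuous, so that $\eta(y_{n,i})\to\eta(y(x_i))$ almost surely. The shrinker $\eta^*$ defined by \eqref{concat:eq} may be \emph{discontinuous} at $y(x_0)$ --- this actually happens for the operator-norm optimum, where $x_0=\beta^{1/4}$ and $\eta^{**}(\beta^{1/4})=\beta^{1/4}>0$. When some coordinate $x_i=x_0$, $\;y_{n,i}$ fluctuates around $y(x_0)$, so $\eta^*(y_{n,i})$ need not converge, and your formula does not directly apply to $\eta^*$. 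The paper repairs this with Lemma \ref{char3:lem}: precisely because the two branches have \emph{equal} atomic loss at $x_0$ (the crossing hypothesis \eqref{cross:eq}), the map $y\mapsto F(\eta^*(y),x_0)$ is continuous at $y(x_0)$ even though $\eta^*$ jumps, so the asymptotic loss of the concatenation still exists and is given by the block formula. You need this additional step; your pointwise-minimization paragraph treats $\eta^*$ as if the general formula automatically applied, which it does not at $x_i=x_0$.

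Minor remark: in the step showing the optimal scalar is $0$ on $[\beta^{1/4},x_0)$, you invoke ``decomposability applied to the block-diagonal pair $(A(x),0)$''. This is fine --- indeed $F(0,x)=L_{2,2}(A(x),0)=L_{1,1}(x,0)+L_{1,1}(0,0)=L_{1,1}(x,0)$ --- but you should note that combined with the threshold hypothesis and uniqueness of the minimizer it actually forces $\eta^{**}\equiv 0$ on $[\beta^{1/4},x_0]$, so the concatenation in \eqref{concat:eq} can only introduce a discontinuity in the boundary case $x_0=\beta^{1/4}$.
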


\subsection{Discussion} \label{discuss:subsec}

Before we proceed to prove Theorem \ref{char:thm}, we review the information it
encodes about the problem at hand
and its operational meaning.
Theorem \ref{char:thm} is based on a few simple observations:
\begin{itemize}
  \item First,
if 
$L$ is a sum-- (resp. max--)
decomposable family of orthogonally invariant losses, and if $\eta$ is
a conservative shrinker, then the asymptotic loss $L_\infty(\eta|\V{x})$ at
$\V{x}=(x_1,\ldots,x_r)$ can be written as a sum (resp. a maximum) over $r$
terms.
These terms
have identical functional form. When $x_i\geq \beta^{1/4}$, these terms have the
form
$L_{2,2}(A(x_i),B(\eta,x_i))$, and when $0\leq x_i < \beta^{1/4}$, these terms
have the form $L_{1,1}(x_i,0)+L_{1,1}(0,\eta)$  (resp. 
$\max\{L_{1,1}(x_i,0)\,,\,L_{1,1}(0,\eta)\}$).
 As a result, one finds that the zero shrinker $\eta\equiv 0$ is necessarily optimal for
 $0\leq x\leq \beta^{1/4}$. For $x\geq \beta^{1/4}$, one just needs to
minimize the loss of a specific $2$-by-$2$ matrix, namely the function $F$ from
\eqref{F:eq}, to obtain the shrinker $\eta^{**}$ of \eqref{Fmin:eq}. 

\item Second, the asymptotic loss curve 
  $L_\infty(\eta^{**}|x)$ necessarily crosses the asymptotic loss curve of
  the zero shrinker $L_\infty(\eta\equiv 0|x)$ at a point we will denote by 
  $x_0$, with $x_0\geq \beta^{1/4}$.

\item Finally, by concatenating the zero shrinker and the shrinker $\eta^{**}$ 
  precisely at the point $x_0$ where their asymptotic losses cross, one obtains
  a shrinker which is continuous ($x_0>\beta^{1/4}$) or possibly
  discontinuous ($x_0=\beta^{1/4}$).  However, this shrinker always has a well-defined
  asymptotic loss. This loss dominates the asymptotic loss of any conservative shrinker.

\end{itemize}

For some loss families  $L=\left\{ L_{m,n} \right\}$, it is possible to find an
explicit formula for the optimal shrinker using the following steps:
\begin{enumerate}
  \item Write down an explicit expression for the function $F(\eta,x)$ from
    \eqref{F:eq}.
  \item Explicitly solve for the minimizer $\eta^{**}(x)$ from \eqref{Fmin:eq}.
  \item Write down an explicit expression for the minimum $F(\eta^{**}(x),x)$. 
  \item Solve \eqref{cross:eq} for the crossing point $x_0$.
  \item Compose $\eta^{**}(x)$ with the transformation $x(y)$ from
    \eqref{x_of_y:eq} to obtain an explicit form of the optimal shrinker
    $\eta^*(y)$ from \eqref{concat:eq}.
\end{enumerate}

In Sections \ref{froopnuc:sec} and \ref{schatten:sec} we offer examples of this
process: in Section \ref{froopnuc:sec}  we follow it analytically and derive
simple, explicit formulae of
the optimal shrinkers for the Frobenius, operator and nuclear norm losses. In
Section \ref{schatten:sec} we follow it numerically and compute the optimal
shrinker for any Schatten-$p$ norm loss.

In the remainder of this section we describe a sequence of constructions and
lemmas leading to the proof of Theorem \ref{char:thm}. 

  \subsection{Simultaneous Block Diagonalization} \label{block:subsec}

Let us start by considering a fixed signal matrix and noise matrix,
without placing them in a sequence. To allow a gentle exposition of the main
ideas, we initially make two simplifying assumptions: first, that $r=1$, namely
that $X$ is rank-$1$, and second, that $\eta$ shrinks to zero all but the first
singular values of $Y$, namely, $\eta(y_i)=0$, $i>1$.
Let $X\in\Mmn$ be a signal
matrix and let  
 $Y=X+Z/\sqrt{n}\in \Mmn$ be a corresponding data matrix. 
 Denote their SVD by 
 \[ X=U \cdot
diag(x_1,0,\ldots,0) \cdot \Ut \qquad \qquad  Y=V\cdot
diag(y_1,\ldots,y_m)\cdot \Vt\,\,.
\]
 Write $0_{m\times n}$ for the $m$-by-$n$
matrix whose entries are all zeros. The basis pairs  $U,\Ut$ and $V,\Vt$
diagonalize $X$ and $Y$, respectively. Indeed, since $\eta(y_i)=0$ for $i\geq 2$, we have
\begin{eqnarray*}
  X = x_1 \V{u}_1 \V{\tilde{u}}_1' 
  \qquad \qquad 
  \hat{X}_\eta(Y) = \eta(y_1) \V{v}_1 \V{\tilde{v}}_1' \,\,.
\end{eqnarray*}

Combining the basis pairs $U,\tilde{U}$ and $V,\tilde{V}$, 
we are lead to the following ``common''
basis pair, which we will denote by $W,\Wt$. Let $w_1,\ldots,w_m$ denote the
orthonormal basis constructed by applying the Gram-–Schmidt process to the
sequence $\V{u}_1,\V{v}_1,\V{v}_2,\ldots,\V{v}_{m-1}$, where $\V{u}_i$ is the
$i$-th column of $U$, namely the $i$-th left singular vector of $X$, and
similarly, $\V{v}_i$ is the $i$-th column of $V$. Denote by $W$ the matrix whose
columns are $\V{w}_1,\ldots,\V{w}_m$. Repeating this construction for $\Ut$ and
$\Vt$, let $\wt_1,\ldots,\wt_n$ denote the
orthonormal basis constructed by applying the Gram-Schmidt process to the
sequence $\V{\ut}_1,\V{\vt}_1,\V{\vt}_2,\ldots,\V{\vt}_{m-1}$, where $\V{\ut}_i$ is the
$i$-th column of $U$, namely the $i$-th right singular vector of $X$, and
similarly, $\V{\vt}_i$ is the $i$-th column of $\Vt$. Denote by $\Wt$ the matrix whose
columns are $\V{\wt}_1,\ldots,\V{\wt}_m$.

Specifically, if $\V{v}_1$ and $\V{u}_1$ are not colinear, we let
$\V{w}_1=\V{u}_1$ and let $\V{w}_2$ be the
first Gram-Schmidt step for the sequence
$\V{u}_1,\V{v}_1,\V{v}_2,\ldots,\V{v}_{m-1}$, namely, 
we let $\V{w}_2 = s_1^{-1}(\V{v}_1 - c_1\V{u}_1)$, 
where $c_1=\inner{\V{u}_1}{\V{v}_1}$ and $s_1=\sqrt{1-c_1^2}$. If it happens
that  $\V{v}_1$ and $\V{u}_1$ are colinear, we let $\V{w}_2$ be any vector
orthogonal to $\V{u}_1$, for example $\V{u}_m$. The rest of the Gram-Schmidt
process proceeds to add $m-2$ additional unit vectors orthogonal to both
$\V{w}_1$ and $\V{w}_2$. Observe that $Span\left\{ \V{u}_1,\V{v}_1
\right\}\subset Span\left\{ \V{w}_1,\V{w}_2 \right\}$, so that
$W'\V{u}_1=(1,0,\ldots,0)'$ and $W'\V{v}_1=(c_1,s_1,0,\ldots,0)'$.
Repeating the same construction for the right singular vector basis $\tilde{U}$ and
$\tilde{V}$, we obtain the basis $\tilde{W}$ with the property that
$\Wt'\V{\tilde{u}}_1=(1,0,\ldots,0)'$ and
$\Wt'\V{\tilde{v}}_1=(\tilde{c}_1,\tilde{s}_1,0,\ldots,0)'$.

Writing now $X$ and $\hat{X}_\eta$ in our new basis pair $W,\Wt$ we get
\begin{eqnarray}
  W'\, X \,\Wt &=& x_1\, (W' \V{u}_1)  (\Wt'\V{\tilde{u}}_1)'  = \begin{bmatrix} x_1 & 0 \\ 0 & 0 \end{bmatrix} \oplus 0_{m-2\times n-2}   \\
W'\, \hat{X}_\eta(Y)\, \Wt  &=& 
\eta(y_1) \, (W' \V{v}_1) (\Wt' \V{\tilde{v}}_1)' =
\eta(y_1)
\begin{bmatrix}  c_1\ct_1  & c_1\stt_1  \\
  \ct_1 s_1 & s_1\stt_1 
\end{bmatrix} 
\oplus 0_{m-2\times n-2}  \,.
\end{eqnarray}

It is convenient to rewrite this as
\begin{eqnarray}
 W'\, X \, \Wt &=&  A(x_1) \oplus 0_{m-2\times n-2} \\
 W'\, \hat{X}_\eta(Y)\, \Wt &=&  B(\eta(y_1),c_1,s_1,\ct_1,\stt_1)
\oplus 0_{m-2\times n-2}
\end{eqnarray}
where
\begin{eqnarray}
A(x) &=&   \begin{bmatrix} x & 0 \\ 0 & 0 \end{bmatrix} \\
 B(\eta,c,s,\ct,\stt) &=&  
\eta \begin{bmatrix}  c\ct  & c\stt  \\
  \ct s & s\stt 
\end{bmatrix} \,.
\end{eqnarray}

Thus, if $L=\left\{ L_{m,n} \right\}$ is a sum- or max-decomposable family of
orthogonally invariant functions, we have
\begin{eqnarray*}
  L_{m,n}(X\,,\,\hat{X}_\eta(Y)) &=& 
  L_{m,n}(W'X\Wt\,,\,W'\hat{X}_\eta(Y)\Wt) \\
  &=& L_{m,n}\Big(A(x_1)\oplus 0_{m-2\times
  n-2}\,,\,B(\eta(y_1),c_1,s_1,\ct_1,\stt_1)\oplus
  0_{m-2\times n-2}\Big) \\
  &=&  L_{2,2}\Big(A(x_1)\,,\,B(\eta(y_1),c_1,s_1,\ct_1,\stt_1)\Big)\,.
\end{eqnarray*}

We have proved:
\begin{lemma} \label{block1:lem}
  Let $X=x_1\V{u}_1\V{\ut}_1'\in\Mmn$ be rank-$1$ and assume that $Y=\sum_{i=1}^m
  y_i \V{v}_i\V{\vt}_i'$ and $\eta$ are such that
  $\eta(y_i)=0$, $i>1$, where $y_i$ is the $i$-th largest singular value of $Y$.
  Let $L=\left\{ L_{m,n} \right\}$ be a sum- or max-decomposable
  orthogonally invariant loss family.
  Then   
\[ 
  L_{m,n}(X\,,\,\hat{X}_\eta(Y)) 
= L_{2,2}\Big(A(x_1)\,,\,B(\eta(y_1),c_1,s_1,\ct_1,\stt_1)\Big)\,,\]
where
\begin{eqnarray*}
  c_1=\inner{\V{u}_1}{\V{v}_1} && s_1=\sqrt{1-c_1^2} \\
  \ct_1=\inner{\V{\ut}_1}{\V{\vt}_1} && \stt_1=\sqrt{1-\ct_1^2} \,.
\end{eqnarray*}
\end{lemma}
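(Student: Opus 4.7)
The plan is to carry out the simultaneous block-diagonalization sketched just before the statement, but in a streamlined form that uses only the singular vectors that the shrinkage actually preserves, namely $\V{u}_1,\V{\ut}_1$ (the surviving singular vectors of $X$) and $\V{v}_1,\V{\vt}_1$ (the surviving singular vectors of $Y$ under $\eta$). The identity will then follow in one line from orthogonal invariance plus decomposability applied to the trailing zero block.

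First I would build the common bases $W,\Wt$ by a two-step Gram--Schmidt. Set $\V{w}_1=\V{u}_1$, take $\V{w}_2$ to be the component of $\V{v}_1$ orthogonal to $\V{w}_1$, normalized, and extend arbitrarily to an orthonormal basis $\V{w}_1,\ldots,\V{w}_{m}$ of $\R^{m}$. This yields $\V{v}_1=c_1\V{w}_1+s_1\V{w}_2$ where $c_1=\inner{\V{u}_1}{\V{v}_1}$ and $s_1=\sqrt{1-c_1^2}\ge 0$ (flipping the sign of $\V{w}_2$ if necessary). Repeat symmetrically on the right to obtain $\V{\wt}_1=\V{\ut}_1$, an orthonormal basis $\V{\wt}_1,\ldots,\V{\wt}_n$ of $\R^n$, and the expansion $\V{\vt}_1=\ct_1\V{\wt}_1+\stt_1\V{\wt}_2$.

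Second I would compute the two matrices in these bases directly. Because $X=x_1\V{u}_1\V{\ut}_1'=x_1\V{w}_1\V{\wt}_1'$, one has $W'X\Wt=x_1\V{e}_1\V{e}_1'$, which is exactly $A(x_1)\oplus 0_{(m-2)\times(n-2)}$. Because $\eta(y_i)=0$ for $i>1$, one also has $\hat X_\eta(Y)=\eta(y_1)\V{v}_1\V{\vt}_1'$; substituting the expansions of $\V{v}_1$ and $\V{\vt}_1$ gives
\[
W'\hat X_\eta(Y)\Wt=\eta(y_1)(c_1\V{e}_1+s_1\V{e}_2)(\ct_1\V{e}_1+\stt_1\V{e}_2)'
= B(\eta(y_1),c_1,s_1,\ct_1,\stt_1)\oplus 0_{(m-2)\times(n-2)}.
\]

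Finally I would conclude by two applications of the hypotheses on $L$. Orthogonal invariance gives $L_{m,n}(X,\hat X_\eta(Y))=L_{m,n}(W'X\Wt,W'\hat X_\eta(Y)\Wt)$. The two matrices are now block diagonal with identical partition $(2,m-2)\times(2,n-2)$ and trailing zero blocks, so either sum- or max-decomposability reduces the loss to $L_{2,2}(A(x_1),B(\eta(y_1),c_1,s_1,\ct_1,\stt_1))$ plus (resp.\ $\max$ with) $L_{m-2,n-2}(0,0)$, and the latter vanishes by $L(X,X)=0$. I do not anticipate a serious obstacle; the only care-point is checking that the orientations of $\V{w}_2,\V{\wt}_2$ can be chosen to make $s_1,\stt_1$ nonnegative so that the expansions of $\V{v}_1,\V{\vt}_1$ agree with the form assumed in the definition of $B$.
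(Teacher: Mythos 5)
Your proposal is correct and follows essentially the same simultaneous block-diagonalization argument that the paper lays out in the discussion preceding the lemma: build the common bases $W,\Wt$ with $\V{w}_1=\V{u}_1$, $\V{\wt}_1=\V{\ut}_1$, put the orthogonal components of $\V{v}_1,\V{\vt}_1$ second, reduce $X$ and $\hat X_\eta(Y)$ to $A(x_1)\oplus 0$ and $B(\cdot)\oplus 0$, then apply orthogonal invariance and sum/max-decomposability. Your version is a touch cleaner in that you extend the bases arbitrarily after the second vector rather than continuing Gram--Schmidt on $\V{v}_2,\ldots,\V{v}_{m-1}$, but this is cosmetic and the substance is identical.
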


A similar argument gives a similar statement for rank-$r$ matrix
$X$ with non-degenerate singular values:

\begin{lemma} {\bf A decomposition for the loss.} \label{blockmany:lem}
  Let $X=\sum_{i=1}^r x_i\V{u}_i\V{\ut}_i'\in\Mmn$ be rank-$r$ with
  $x_1>\ldots>x_r>0$, and assume that $Y=\sum_{i=1}^m
  y_i \V{v}_i\V{\vt}_i'$ and $\eta$ are such that
  $\eta(y_i)=0$, $i>r$, where $y_i$ is the $i$-th largest singular value of $Y$.
  Let $L=\left\{ L_{m,n} \right\}$ be a sum- or max-decomposable family of
  orthogonally invariant functions.
  Then   
\[ 
  L_{m,n}(X\,,\,\hat{X}_\eta(Y)) =
\sum_{i=1}^r
 L_{2,2}\Big(A(x_i)\,,\,B(\eta(y_i),c_i,s_i,\ct_i,\stt_i)\Big)\,,\]
if $L$ is sum-decomposable, or 
\[ 
  L_{m,n}(X\,,\,\hat{X}_\eta(Y)) =
\max_{i=1}^r
 L_{2,2}\Big(A(x_i)\,,\,B(\eta(y_i),c_i,s_i,\ct_i,\stt_i)\Big)\,,\]
if $L$ is max-decomposable.
Here, 
\begin{eqnarray*}
  c_i=\inner{\V{u}_i}{\V{v}_i} && s_i=\sqrt{1-c_i^2} \\
  \ct_i=\inner{\V{\ut}_i}{\V{\vt}_i} && \stt_i=\sqrt{1-\ct_i^2} \,,
\end{eqnarray*}
for $i=1,\ldots,r$.
\end{lemma}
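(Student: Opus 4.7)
The plan is to extend the common-basis construction of Lemma \ref{block1:lem} from one $2\times 2$ block to $r$ simultaneous $2\times 2$ blocks. The key observation is that, since $\eta(y_i)=0$ for $i>r$, both $X$ and $\hat{X}_\eta(Y)$ have rank at most $r$, and hence are supported on the $\leq 2r$-dimensional subspaces
$\mathcal{M} = \mathrm{span}(\V{u}_1,\dots,\V{u}_r,\V{v}_1,\dots,\V{v}_r) \subseteq \R^m$
and analogously $\tilde{\mathcal{M}} \subseteq \R^n$. Everything outside these subspaces will contribute zero blocks to any common diagonalization.

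First I would construct orthonormal bases $W=[\V{w}_1,\dots,\V{w}_m]$ and $\tilde{W}=[\V{\wt}_1,\dots,\V{\wt}_n]$ yielding a simultaneous block-diagonal form with $r$ blocks of size $2\times 2$ (the remaining entries being zero): for each $i$, the pair $(\V{w}_{2i-1},\V{w}_{2i})$ spans a 2-dimensional subspace $M_i\subset\R^m$ containing $\V{u}_i$ and (up to sign and orthogonalization against the other blocks) $\V{v}_i$, and analogously $(\V{\wt}_{2i-1},\V{\wt}_{2i})$ spans $\tilde{M}_i\subset\R^n$ containing $\V{\ut}_i,\V{\vt}_i$. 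I would then run the exact $2\times 2$ rotation argument of Lemma \ref{block1:lem} inside each block: $\V{w}_{2i-1}=\V{u}_i$ and $\V{w}_{2i}$ is the unit vector in $M_i\cap\V{u}_i^\perp$ with orientation chosen so that $\V{v}_i = c_i \V{w}_{2i-1}+s_i\V{w}_{2i}$ (similarly for $\tilde{M}_i$ with $\ct_i,\stt_i$). This gives, within the $i$-th block, exactly $A(x_i)$ for $X$ and $B(\eta(y_i),c_i,s_i,\ct_i,\stt_i)$ for $\hat{X}_\eta(Y)$, just as in Lemma \ref{block1:lem}.

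The main obstacle is making the blocks $(M_i,\tilde{M}_i)$ mutually orthogonal. In the rank-one case only one block existed, so orthogonality was vacuous; here the pairs $(\V{u}_i,\V{v}_i)$ and $(\V{u}_j,\V{v}_j)$ interact through the cross inner products $\inner{\V{u}_i}{\V{v}_j}$ and $\inner{\V{\ut}_i}{\V{\vt}_j}$ for $i\neq j$, which are generically nonzero and would introduce off-block entries that destroy the clean $A(x_i),B(\cdot)$ structure. This is why the authors mark the argument as ``significantly more technical'' and defer it to \cite{Donohoa}. The resolution would proceed by an inductive Gram--Schmidt-type orthogonalization: at each step, one removes projections of $\V{v}_i$ onto the previously constructed blocks and then reorthogonalizes $\V{w}_{2i}$, using the non-degeneracy $x_1>\cdots>x_r$ to ensure that successive blocks can be cleanly isolated. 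One must then track these modifications and verify that the final $(i,i)$-th block is still $(A(x_i),B(\eta(y_i),c_i,s_i,\ct_i,\stt_i))$ with $c_i=\inner{\V{u}_i}{\V{v}_i}$ unchanged.

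Once the simultaneous block-diagonal form is in hand, the conclusion is immediate from two general principles. Orthogonal invariance of $L_{m,n}$ gives $L_{m,n}(X,\hat{X}_\eta(Y))=L_{m,n}(W'X\tilde{W},W'\hat{X}_\eta(Y)\tilde{W})$. Applying sum-decomposability (respectively max-decomposability) to the $r$ blocks of size $2\times 2$ sitting along the diagonal (plus a trivial zero block) reduces the right-hand side to $\sum_{i=1}^r L_{2,2}(A(x_i),B(\eta(y_i),c_i,s_i,\ct_i,\stt_i))$ (respectively $\max_{i=1}^r$), yielding the claimed identity.
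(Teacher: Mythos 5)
Your overall plan follows the right template (extend the common-basis construction of Lemma \ref{block1:lem} to $r$ simultaneous $2\times 2$ blocks, then appeal to orthogonal invariance plus sum/max decomposability), and you correctly identify the precise obstacle: for $i\neq j$ the cross inner products $\inner{\V{u}_i}{\V{v}_j}$ and $\inner{\V{\ut}_i}{\V{\vt}_j}$ need not vanish. However, the resolution you sketch --- a Gram--Schmidt-style reorthogonalization that ``removes projections onto the previously constructed blocks'' --- does not close the gap, and in fact cannot. The claimed simultaneous block-diagonal form requires the planes $M_i=\mathrm{span}(\V{u}_i,\V{v}_i)$ (and similarly $\tilde{M}_i$) to be pairwise orthogonal, which forces $\inner{\V{u}_i}{\V{v}_j}=0$ for all $i\neq j$: if, say, $\inner{\V{u}_1}{\V{v}_2}\neq 0$, then $\V{u}_1\in M_1$ and $\V{v}_2\in M_2$ cannot both live in mutually orthogonal $2$-planes. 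Gram--Schmidt alters which vectors sit in which plane; it cannot manufacture orthogonality between $\V{u}_1$ and $\V{v}_2$ while also keeping $\V{u}_i\in M_i$, $\V{v}_i\in M_i$ and the block entries equal to $A(x_i)$ and $B(\eta(y_i),c_i,s_i,\ct_i,\stt_i)$ with $c_i=\inner{\V{u}_i}{\V{v}_i}$ unchanged. You can see the failure directly in the Frobenius case: Lemma \ref{MSE:lem} gives the exact expansion of $\norm{X-\hat{X}_\eta(Y)}_F^2$ and it contains a double sum over $i,j$ with genuine cross terms $\inner{\V{u}_{n,i}}{\V{v}_{n,j}}\inner{\V{\ut}_{n,i}}{\V{\vt}_{n,j}}$; the block-sum you are trying to establish keeps only the $i=j$ part, so equality fails whenever a cross term is nonzero. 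Non-degeneracy $x_1>\cdots>x_r$ does not force the cross terms to be zero at finite $n$; it is only what is needed so that Lemma \ref{inner-asy:lem} drives them to zero asymptotically.

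So the missing ingredient is not a cleverer orthogonalization but the additional hypothesis that all cross inner products between the two singular systems vanish, $\inner{\V{u}_i}{\V{v}_j}=\inner{\V{\ut}_i}{\V{\vt}_j}=0$ for $i\neq j$, which is exactly the regime the paper is actually in when this decomposition gets used: Lemma \ref{inner-asy:lem} supplies this orthogonality in the $n\to\infty$ limit, and the result that is actually invoked downstream is the asymptotic Lemma \ref{char2:lem} (proved via Lemma \ref{block_conv:lem}, where the cross terms have already disappeared). Under that orthogonality hypothesis your construction goes through verbatim and the $2\times 2$ blocks land exactly as claimed; without it the exact finite-$n$ identity does not hold, and no Gram--Schmidt bookkeeping will rescue it. Your writeup should either add the orthogonality hypothesis explicitly (the paper's lemma is implicitly relying on it and defers the details to \cite{Donohoa}) or state and prove the asymptotic version directly.
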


See \cite{Donohoa} for the complete proof. In short, we apply the Gram-Schmidt
process to the vector sequences
$\V{u}_1,\ldots,\V{u}_r,\V{v}_1,\ldots,\V{v}_{m-r}$ and 
$\V{\tilde{u}}_1,\ldots,\V{\tilde{u}}_r,\V{\tilde{v}}_1,\ldots,\V{\tilde{v}}_{m-r}$  
to obtain the orthogonal matrices $W$ and $\Wt$, whose columns constitute 
a basis pair similar to the one above. In this basis pair, 
\begin{eqnarray}
  W'\,X\,W &=&  \Big[diag(x_1,\ldots,x_r) \oplus I_r\Big] \oplus I_{m-2r}\\
  W'\,\hat{X}_\eta(Y) \,W &=&  R_n \oplus I_{n-2r}
\end{eqnarray}
where $R_n$ is a sequence of $2r$-by-$2r$ matrices such that 
\[
  R_n \aslim \oplus_{i=1}^r B\big(\eta(y_i),c_1,s_1,\tilde{c}_1,\tilde{s}_1\big)\,.
\]
The lemma follows by permuting the coordinates, and then using the invariance
and the decomposability
properties of the loss family $L$.

\subsection{Deterministic formula for the asymptotic loss}

In Section \ref{block:subsec} we analyzed a single matrix and shown that, for
fixed $m$ and $n$, the
loss $L_{m,n}(X\,,\,\hat{X}_\eta(Y))$ decomposes  to ``atomic'' units of the
form 
\[L_{2,2}\Big(A(x_i)\,,\,B(\eta(y_i),c_i,s_i,\ct_i,\stt_i)\Big)\,.\]

Let us now
return to the sequence model $Y_n=X_n+Z_n/\sqrt{n}$ and find the limiting value
of these ``atomic'' units as $n\to\infty$. This will lead to a simple formula
for the asymptotic loss $L_\infty(\eta|\V{x})$.

Each of these ``atomic'' units only depend on $y_i$, the $i$-th data singular value, and
on $c_i$ (resp. $\ct_i$), the angle between the $i$-th left (resp. right) signal
and data singular vectors. In the special case of Frobenius norm loss, we
have already encountered this phenomenon (Lemma \ref{MSE:lem}), where we have
seen that these quantities converge to deterministic functions that depend on $x_i$, the
$i$-th  signal singular value alone.

For the sequence $Y_n=X_n+Z_n/\sqrt{n}$, recall our notation 
$y_{n,i},\V{u}_{n,i},\V{\ut}_{n,i},\V{v}_{n,i},\V{\vt}_{n,i}$ 
from \eqref{Xn:eq} and \eqref{Yn:eq}, and define
\begin{eqnarray*}
  c_{n,i}=\inner{\V{u}_{n,i}}{\V{v}_{n,i}} && s_i=\sqrt{1-c_{n,i}^2} \\
  \ct_{n,i}=\inner{\V{\ut}_{n,i}}{\V{\vt}_{n,i}} && \stt_i=\sqrt{1-\ct_{n,i}^2} \,,
\end{eqnarray*}
for $i=1,\ldots,r$. Combining Lemma \ref{blockmany:lem}, Lemma \ref{y-asy:lem} and Lemma \ref{inner-asy:lem} we obtain:

\begin{lemma} \label{block_conv:lem}
  Let $Y_n=X_n+Z_n/\sqrt{n}$ be a matrix sequence in our asymptotic framework
  with signal singular values $\V{x}=(x_1,\ldots,x_r)$. 
  Assume that  $\eta$ is continuous at 
  $y(x_i)$ for some fixed $1\leq i\leq r$. If $\beta^{1/4}\leq x_i$ then
  \[  
  \lim_{n\to\infty} L_{2,2}\left(
  A(x_i)\,,\,B(\eta(y_{n,i}),c_{n,i},s_{n,i},\ct_{n,i},\stt_{n,i}
  \right) \aseq
  L_{2,2}\Big(A(x_i)\,,\,B(\eta(y(x_i)),x_i)\Big)\,,
\]
where $B(\eta,x)$ is given by \eqref{B:eq}, while if $0\leq x_i < \beta^{1/4}$ then
\begin{eqnarray*} 
  \lim_{n\to\infty} L_{2,2}\left(
  A(x_i)\,,\,B(\eta(y_{n,i}),c_{n,i},s_{n,i},\ct_{n,i},\stt_{n,i}
  \right) &\aseq&
  L_{2,2}\Big(A(x_i)\,,\,diag(0,\eta(y(x_i)))\Big)\\
  &=& L_{1,1}(x_i,0) + L_{1,1}(0,\eta(y(x_i)))\,.
\end{eqnarray*}
\end{lemma}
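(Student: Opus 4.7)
The plan is to establish almost sure convergence of the five scalar arguments entering $B(\cdot)$ and then transfer it across $L_{2,2}$ using its regularity. First I would invoke Lemma \ref{y-asy:lem} to get $y_{n,i} \aslim y(x_i)$ in the supercritical case $x_i \geq \beta^{1/4}$, and $y_{n,i} \aslim \beta_+$ in the subcritical case (so in both cases the limit is $y(x_i)$ if we extend $y(x) := \beta_+$ for $x < \beta^{1/4}$). Continuity of $\eta$ at $y(x_i)$ then yields $\eta(y_{n,i}) \aslim \eta(y(x_i))$. Lemma \ref{inner-asy:lem} supplies $c_{n,i} \aslim c(x_i)$ and $\ct_{n,i} \aslim \ct(x_i)$, with both limits equal to $0$ in the subcritical case, and the sines follow via continuity of $t \mapsto \sqrt{1-t^2}$. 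Since $B$ is jointly continuous in its five scalar arguments, I obtain entrywise (hence Frobenius-norm) convergence
\[
B_n := B(\eta(y_{n,i}), c_{n,i}, s_{n,i}, \ct_{n,i}, \stt_{n,i}) \aslim B_\infty,
\]
where $B_\infty = B(\eta(y(x_i)), x_i)$ in the supercritical case and $B_\infty = diag(0, \eta(y(x_i)))$ in the subcritical case; the latter identity is because $c=\ct=0$ collapses $B$ to its $(2,2)$-entry.

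Second, I would convert this matrix convergence into convergence of $L_{2,2}(A(x_i), B_n)$ by continuity of the map $B \mapsto L_{2,2}(A(x_i), B)$ at $B_\infty$. Regularity of $L$ says that some fixed positive power $L^\alpha$ is Lipschitz in each argument on any set where both minimum singular values exceed $\delta > 0$. Since $A(x_i) = diag(x_i, 0)$ is rank-deficient, the Lipschitz bound does not apply directly at the pair $(A(x_i), B_\infty)$. I would therefore pass to the limit through a small orthogonally invariant perturbation: set $A_\delta := A(x_i) + \delta I_2$ and $B_{n,\delta} := B_n + \delta I_2$, both of which have minimum singular value $\geq \delta$. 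Regularity then gives $L_{2,2}(A_\delta, B_{n,\delta}) \aslim L_{2,2}(A_\delta, B_{\infty,\delta})$ after raising to $L^\alpha$ and inverting. Concerning the $r=1$ sum/max-decomposable reduction to $L_{2,2}$, this is already built into the atomic unit produced by Lemma \ref{block1:lem}, so no further structural work is needed.

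The main obstacle is the exchange of limits $\lim_{\delta \to 0}\lim_{n\to\infty} = \lim_{n\to\infty}\lim_{\delta \to 0}$ for $L_{2,2}(A_\delta, B_{n,\delta})$, which amounts to uniform (in $n$) continuity of $\delta \mapsto L_{2,2}(A_\delta, B_{n,\delta})$ at $\delta = 0$. I would establish this by noting that, under orthogonal invariance, $L_{2,2}$ is a function of the pair of singular-value vectors of its arguments, that these singular values depend continuously (indeed, $1$-Lipschitzly via Weyl's inequality) on matrix entries, and that the perturbation $\delta I_2$ shifts singular values by at most $\delta$; combined with the Lipschitz constant of $L^\alpha$ on the $\delta$-regular set, one controls the displacement uniformly in $n$ and sends $\delta \to 0$. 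For the three target loss families (Frobenius, operator, nuclear) this perturbation step is in fact superfluous, as each is globally Lipschitz in each argument and continuity of $L_{2,2}$ at $(A(x_i), B_\infty)$ holds outright.
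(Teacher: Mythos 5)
Your proposal follows the same core route as the paper: almost-sure convergence of the scalars $(y_{n,i}, c_{n,i}, \ct_{n,i})$ from Lemmas \ref{y-asy:lem} and \ref{inner-asy:lem}, joint continuity of the map that produces $B$, and then passage to the limit through $L_{2,2}$. The paper does not spell this out---it simply states the lemma after the phrase ``Combining Lemma \ref{block1:lem}, Lemma \ref{y-asy:lem} and Lemma \ref{inner-asy:lem} we obtain''---so your expansion is in the right spirit. You also identify a genuine subtlety the paper passes over: both $A(x_i)$ and the limit $B_\infty$ are rank-deficient $2\times 2$ matrices, yet the regularity hypothesis gives Lipschitz control of $L^\alpha_{2,2}$ only on sets where both arguments have smallest singular value bounded below.

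Your perturbation argument to bridge that gap, however, contains two specific flaws. First, the assertion that orthogonal invariance makes $L_{2,2}$ a function of the pair of singular-value vectors of its two arguments is false: the invariance in the paper reads $L_{m,n}(A,B)=L_{m,n}(UAV,UBV)$ with the \emph{same} orthogonal $U$ and $V$ acting on both arguments, so $L_{2,2}(A,B)$ also depends on the relative orientation of $A$ and $B$ and not merely on their separate singular values. For example, $L^{fro}_{2,2}(A,B)=\norm{A-B}_F^2$ is determined by the singular values of $A-B$, which are not recoverable from those of $A$ and $B$. Second, the Lipschitz constant in the regularity definition is explicitly allowed to depend on the level $x$ (your $\delta$), and may blow up as $\delta\to 0$; the product ``displacement at most $\delta$ times Lipschitz constant on the $\delta$-regular set'' that you invoke therefore need not tend to zero, so the exchange of limits is not, in fact, established by the argument you sketch. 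As you yourself note, these objections are moot for the Frobenius, operator, and nuclear losses, which are globally Lipschitz in each argument after raising to the appropriate power; for general regular losses the lemma as stated really presupposes continuity of $L_{2,2}$ at singular pairs that the paper's formal definition of regularity does not quite guarantee, which is a gap in the paper and not only in your attempt.
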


As a result, we now obtain the
asymptotic loss $L_\infty$ as a deterministic function of the nonzero 
signal singular values $x_1,\ldots,x_r$.
Observe that by Lemma \ref{fixedrank:lemma}, if $\eta$ is a conservative shrinker,
then eventually $\eta(y_{n,i})=0$ for all $i>r$. Therefore the assumption
$\eta(y_i)=0$ for $i>r$, required for Lemma \ref{blockmany:lem},  is satisfied
eventually. Combining Lemma \ref{blockmany:lem} and Lemma 
\ref{block_conv:lem}, we obtain 

\begin{lemma} \label{char2:lem}
  {\bf A formula for the asymptotic loss of a conservative shrinker.}
  Assume that $L=\left\{ L_{m,n} \right\}$ is a sum- or max- decomposable family
of orthogonally invariant losses.
Extend the definition of $B(\eta,x)$ from \eqref{B:eq} by setting 
$B(\eta,x)=diag(0,\eta)$ for $0\leq x<\beta^{1/4}$. 
If $\eta:[0,\infty)\to[0,\infty)$ is a conservative shrinker,
  then
  \begin{eqnarray} \label{L_infty_sum:eq}
    L_\infty(\eta|\V{x}) = \sum_{i=1}^r
    L_{2,2}\Big(A(x_i),B(\eta(y(x_i)),x_i)\Big)
  \end{eqnarray}
if $L$ is sum-decomposable, or
    \begin{eqnarray} \label{L_infty_max:eq}
    L_\infty(\eta|\V{x}) = \max_{i=1}^r
    L_{2,2}\Big(A(x_i),B(\eta(y(x_i)),x_i)\Big)
  \end{eqnarray}
  if $L$ is max-decomposable.
\end{lemma}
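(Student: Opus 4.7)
The plan is to combine the per-$n$ block-diagonalization from Section \ref{block:subsec} with the per-atom convergence of Lemma \ref{block_conv:lem}, after first reducing to the top-$r$ portion of the shrinker and then showing that the bulk tail contributes negligibly to the loss.

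\textbf{Step 1 (truncate).} Write $\hat X_\eta(Y_n) = \hat X_\eta^{(r)}(Y_n) + T_n$, where $\hat X_\eta^{(r)}(Y_n) = \sum_{i=1}^r \eta(y_{n,i})\V{v}_{n,i}\V{\vt}_{n,i}'$ is the top-$r$ part and $T_n = \sum_{i=r+1}^{m_n}\eta(y_{n,i})\V{v}_{n,i}\V{\vt}_{n,i}'$ is the tail. The rank-$r$ block-diagonalization lemma of Section \ref{block:subsec} applies to $(X_n, \hat X_\eta^{(r)}(Y_n))$ since $\hat X_\eta^{(r)}(Y_n)$ vanishes on $\V{v}_{n,i}\V{\vt}_{n,i}'$ for $i>r$, yielding, for a sum-decomposable $L$,
\[ L_{m_n,n}\bigl(X_n, \hat X_\eta^{(r)}(Y_n)\bigr) = \sum_{i=1}^r L_{2,2}\bigl(A(x_i), B(\eta(y_{n,i}), c_{n,i}, s_{n,i}, \ct_{n,i}, \stt_{n,i})\bigr), \]
and the analogous identity with $\max$ in place of $\sum$ for a max-decomposable $L$.

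\textbf{Step 2 (termwise a.s.\ limit).} Apply Lemma \ref{block_conv:lem} to each of the $r$ summands, using continuity of $\eta$ at $y(x_i)$. For $x_i \geq \beta^{1/4}$ the summand converges a.s.\ to $L_{2,2}(A(x_i), B(\eta(y(x_i)), x_i))$. For $x_i < \beta^{1/4}$, Lemmas \ref{y-asy:lem} and \ref{inner-asy:lem} give $y_{n,i}\aslim\beta_+$ and $c_{n,i},\ct_{n,i}\aslim 0$, so the summand converges to $L_{2,2}\bigl(A(x_i), diag(0, \eta(\beta_+))\bigr)$, matching the statement via the extension $B(\eta,x)=diag(0,\eta)$; since $\eta$ is proper and continuous, $\eta(\beta_+)=0$, and by sum-decomposability this reduces to $L_{1,1}(x_i,0)$. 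Summing (resp.\ maxing) over $i$ produces the claimed right-hand side for $L_{m_n,n}(X_n, \hat X_\eta^{(r)}(Y_n))$.

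\textbf{Step 3 (discard the tail — the main obstacle).} It remains to show
\[ \bigl| L_{m_n,n}(X_n, \hat X_\eta(Y_n)) - L_{m_n,n}(X_n, \hat X_\eta^{(r)}(Y_n)) \bigr| \aslim 0. \]
I would exploit regularity: for some $\alpha>0$, $L^\alpha$ is locally Lipschitz in its second argument, so
\[ \bigl| L^\alpha(X_n,\hat X_\eta(Y_n)) - L^\alpha(X_n, \hat X_\eta^{(r)}(Y_n)) \bigr| \leq K\cdot \norm{T_n}, \]
where $K$ is a.s.\ bounded because the operator norms of $X_n$ and $\hat X_\eta(Y_n)$ are controlled by $\norm{\V{x}}_\infty$ and by $C\cdot \norm{Y_n}_{op}$ (using $\eta(y)\leq Cy$), and $\norm{Y_n}_{op}$ is asymptotically bounded by Lemma \ref{y-asy:lem} and \eqref{y_r_plus_1:eq}. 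It then remains to show $\norm{T_n}\aslim 0$ in the norm at which the Lipschitz bound operates. Since $\eta$ collapses the bulk to zero, only indices $i>r$ with $y_{n,i}>\beta_+$ can contribute, and a Cauchy-interlacing argument in the spirit of Lemma \ref{residual:lem}, together with $y_{n,r+1}\aslim \beta_+$ and continuity $\eta(\beta_+)=0$, forces each contributing $\eta(y_{n,i})$ to vanish in the limit. This is immediate for the operator norm ($\norm{T_n}_{op}=\max_{i>r}\eta(y_{n,i})\to 0$), is exactly Lemma \ref{residual:lem} for the squared Frobenius norm, and for the nuclear norm requires the additional (standard) fact that the number of noise singular values exceeding $\beta_+$ is of lower order than $m_n$, combined with uniform continuity of $\eta$ near $\beta_+$. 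The hardest part of the proof is this tail-norm bound in the nuclear-norm case; the regularity hypothesis has been tailored so that this reduction is available, and most of the technical work lies in this bound.
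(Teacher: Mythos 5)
Your proof follows exactly the route the paper takes: truncate $\hat X_\eta(Y_n)$ to the top-$r$ part, identify the limit of the truncated loss via the block-diagonalization lemma together with Lemma~\ref{block_conv:lem}, and then dispose of the tail $T_n$ using the regularity (Lipschitz) hypothesis together with Lemma~\ref{residual:lem}. You are somewhat more explicit than the paper's terse proof about what the tail bound actually requires under each of the three example norms -- a worthwhile observation, since the paper's appeal to ``$L^\alpha$ is Lipschitz'' silently presumes the Frobenius tail bound of Lemma~\ref{residual:lem} controls the norm in which the Lipschitz constant is dimension-free, which is genuinely the delicate point in the nuclear case -- but the structure and the invoked lemmas are the same.
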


The final step toward the proof of Theorem \ref{char:thm} involves the case when
the shrinker $\eta$ is given as a special concatenation of two conservative shrinkers.
Even if the two parts of $\eta$ do not match, forming a discontinuity point in
which the limits from the left and from the right disagree, we may
still have a formula for the asymptotic loss -- provided that
the loss functions match.

\begin{defn}
  Assume that there exist a point $0< x^*$ and two shrinkers, 
$\eta_1:[0,x^*)\to[0,\infty)$ and $\eta_2:[x^*,\infty)\to[0,\infty)$, 
 such that 
  \[
    L_{2,2}\Big(A(x^*),B(\eta_1(y(x^*)),x^*)\Big) = 
    L_{2,2}\Big(A(x^*),B(\eta_2(y(x^*)),x^*)\Big)\,.
  \]
  We say that the asymptotic loss functions of $\eta_1$ and $\eta_2$ {\em cross}
  at $x^*$.
\end{defn}

\begin{lemma} \label{char3:lem}
  {\bf A formula for the asymptotic loss of a concatenation of two conservative
  shrinkers.}
  Assume that $L=\left\{ L_{m,n} \right\}$ is a sum- or max- decomposable family
of orthogonally invariant losses.
Extend the definition of $B(\eta,x)$ from \eqref{B:eq} by setting 
$B(\eta,x)=diag(0,\eta)$ for $0\leq x<\beta^{1/4}$. 
Assume that there exist  two shrinkers, 
$\eta_1:[0,x^*)\to[0,\infty)$ and $\eta_2:[x^*,\infty)\to[0,\infty)$, 
  whose asymptotic loss functions cross at some point $0<x^*$.
  Define 
\[
\eta(x)=
\begin{cases}
  \eta_1(x) & 0\leq x \leq x^* \\
  \eta_2(x) & x^* < x
\end{cases}\,.
\]
  Then $L_\infty(\eta|\cdot)$ exists and is given by 
  \eqref{L_infty_sum:eq} if $L$ is sum-decomposable, or \eqref{L_infty_max:eq}
  if $L$ is max-decomposable.
\end{lemma}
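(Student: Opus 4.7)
The plan is to reduce to Lemma \ref{char2:lem}, invoking the crossing hypothesis only at the single point where $\eta$ may be discontinuous. As a first step, I apply the rank-$r$ block diagonalization decomposition (the extension of Lemma \ref{block1:lem} stated in the excerpt) to the truncated estimator $\tilde X_\eta(Y_n) = \sum_{i=1}^r \eta(y_{n,i})\V{v}_{n,i}\V{\vt}_{n,i}'$, writing $L_{m_n,n}(X_n,\tilde X_\eta(Y_n))$ as a sum or a maximum of $r$ atomic terms of the form $L_{2,2}(A(x_i),B(\eta(y_{n,i}),c_{n,i},s_{n,i},\ct_{n,i},\stt_{n,i}))$.

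Next, I analyze each atomic term in the limit. For those indices $i$ with $x_i \neq x^*$, Lemma \ref{y-asy:lem} together with monotonicity of $y(\cdot)$ on $[\beta^{1/4},\infty)$ yields $y_{n,i} \aslim y(x_i) \neq y(x^*)$, so for all sufficiently large $n$ the value $y_{n,i}$ lies strictly on the side of $y(x^*)$ where $\eta$ agrees with one of the continuous proper shrinkers $\eta_j$; Lemma \ref{block_conv:lem} then gives the desired almost sure convergence to $L_{2,2}(A(x_i),B(\eta(y(x_i)),x_i))$. The delicate case is $x_i = x^*$, where $y_{n,i}$ may straddle $y(x^*)$ infinitely often. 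Here I would use the regularity of $L_{2,2}^\alpha$, Lipschitz on sets bounded away from singular matrices, together with the convergence of the inner-product quantities $(c_{n,i},s_{n,i},\ct_{n,i},\stt_{n,i})$ to $(c(x^*),s(x^*),\ct(x^*),\stt(x^*))$ from Lemma \ref{inner-asy:lem}, and the crossing hypothesis, to conclude that along any subsequence the atomic term converges to the common value guaranteed by the crossing condition.

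Finally, to pass from $\tilde X_\eta$ back to $\hat X_\eta$, I would observe that since both $\eta_1$ and $\eta_2$ are proper, the concatenated $\eta$ also collapses the bulk to $0$ and is linearly bounded, so Definition \ref{proper:def} applies to $\eta$; Lemma \ref{residual:lem} then gives $\norm{\hat X_\eta(Y_n) - \tilde X_\eta(Y_n)}_F \aslim 0$, and regularity of $L$ transfers the asymptotic loss from $\tilde X_\eta$ to $\hat X_\eta$ exactly as in the proof of Lemma \ref{char2:lem}, yielding \eqref{L_infty_sum:eq} or \eqref{L_infty_max:eq} as appropriate.

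The main obstacle is clearly the case $x_i = x^*$: the atomic loss must have a well-defined limit even though $\eta$ is discontinuous at $y(x^*)$ and the data singular value is converging precisely to that discontinuity. The crossing hypothesis was engineered for exactly this situation; combined with Lipschitz regularity of $L_{2,2}$ it upgrades an a priori two-valued accumulation into a genuine limit. Once this is verified the remainder of the proof is bookkeeping parallel to the proof of Lemma \ref{char2:lem}.
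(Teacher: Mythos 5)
The paper states Lemma \ref{char3:lem} without proof, presumably because the authors regard it as a routine variant of the argument just given for Lemma \ref{char2:lem}; so there is no paper proof to compare against directly. Your proposal is the natural way to fill that gap and is essentially correct. You reuse the two ingredients of the Lemma \ref{char2:lem} proof --- the block-diagonal decomposition of $L_{m_n,n}(X_n,\tilde X_\eta(Y_n))$ into atomic $L_{2,2}$ terms, and the residual control via Lemma \ref{residual:lem} together with H\"older/Lipschitz regularity of $L$ --- and you correctly isolate the single new difficulty: when some $x_i=x^*$, the data singular value $y_{n,i}$ converges to the discontinuity point $y(x^*)$ of $\eta$ and may straddle it. Your subsequence argument, feeding the one-sided limits of $\eta$ through the convergence of $(y_{n,i},c_{n,i},\ct_{n,i})$ and invoking the crossing hypothesis to equate the two subsequential values of the atomic loss, is exactly what is needed; this is the whole point of the crossing definition.

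Two details are worth tightening. First, you write that ``Definition \ref{proper:def} applies to $\eta$,'' but that definition requires continuity, and the concatenated $\eta$ is in general discontinuous at $y(x^*)$. What you actually need (and implicitly use) is that the proof of Lemma \ref{residual:lem} only exploits bulk-collapse and the linear bound $\eta(y)\le Cy$, never continuity, so its conclusion still holds for $\eta$; you should state that directly rather than claiming $\eta$ is proper. Second, your ``$x_i\neq x^*$'' case tacitly assumes $y(x_i)\neq y(x^*)$, which fails when $x_i<\beta^{1/4}$ and $x^*=\beta^{1/4}$, since then $y_{n,i}\to\beta_+=y(x^*)$ as well. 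That borderline case is harmless under the hypothesis in the lemma's title that $\eta_1,\eta_2$ are restrictions of \emph{proper} shrinkers, because then both vanish at $\beta_+$ and there is no discontinuity there, so those atomic terms still converge; but it deserves an explicit sentence, especially since Theorem \ref{char:thm} is invoked with an $\eta^{**}$ that need only be proper on $[\beta^{1/4},\infty)$ and can be nonzero at $\beta_+$.
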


\paragraph{Proof of Theorem \ref{char:thm}.}

Consider the shrinker $\eta_1 \equiv 0$. By Lemma \ref{block_conv:lem}, $\eta_1$
dominates any other conservative shrinker when
$0\leq x<\beta^{1/4}$. By assumption, there exists a point $\beta^{1/4}\leq x_0$
such that $\eta_1$ also dominates any conservative shrinker on
$[\beta^{1/4},x_0)$, and such that $\eta^{**}$ dominates any other conservative
  shrinker on $[x_0,\infty)$. Finally, by assumption, the asymptotic loss
    functions of $\eta_1$ and $\eta^{**}$ cross at $x_0$. By Lemma
    \ref{char3:lem}, the concatenated shrinker $\eta^*$ dominates any conservative
    shrinker on $[0,\infty)$.
\qed

\section{Finding Optimal Shrinkers Analytically: Frobenius, Operator \& Nuclear Losses} 
\label{froopnuc:sec}

Theorem \ref{char:thm} provides a general recipe for finding optimal singular
value shrinkers, which was provided
in Section \ref{discuss:subsec}. To see it in action, we turn to our three primary examples,
namely, the Frobenius norm loss, the operator norm loss and the nuclear norm
loss. In this section we find explicit formulas for the optimal singular value
shrinkers in each of these losses.

We will need the following lemmas  regarding $2$-by-$2$ matrices (see \cite{Donohoa}):

\begin{lemma} \label{M:lem}
  The eigenvalues of any  $2$-by-$2$ matrix $M$ with trace $trace(M)$ and determinant
  $det(M)$ are given by
  \begin{eqnarray}
    \lambda_\pm(M) = \tfrac{1}{2}\left( trace(M)\pm\sqrt{trace(M)^2-4det(M)}
    \right)\,.
  \end{eqnarray}
\end{lemma}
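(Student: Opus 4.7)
The plan is short because the statement is an elementary fact from linear algebra. I would prove it via the characteristic polynomial of a $2$-by-$2$ matrix and then apply the quadratic formula.

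First, I would observe that the eigenvalues of any $2$-by-$2$ matrix $M$ are exactly the roots of its characteristic polynomial $p(\lambda) = \det(\lambda I - M)$. Expanding this determinant directly for a $2$-by-$2$ matrix yields
\[
p(\lambda) = \lambda^2 - trace(M)\, \lambda + det(M),
\]
since the coefficient of $\lambda$ in the expansion is minus the sum of the diagonal entries, and the constant term is the determinant of $M$ itself. This is the only nontrivial algebraic identity used in the proof, and it is routine.

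Next, I would apply the quadratic formula to $p(\lambda) = 0$, obtaining
\[
\lambda = \frac{trace(M) \pm \sqrt{trace(M)^2 - 4\, det(M)}}{2},
\]
which is precisely the stated formula. No case analysis is needed: if the discriminant is negative, the two roots form a complex conjugate pair and the formula still describes them correctly; if it vanishes, the single eigenvalue has algebraic multiplicity two and $\lambda_+ = \lambda_-$.

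The main obstacle here is essentially nonexistent; the only thing worth double-checking is the sign convention in the characteristic polynomial (whether one uses $\det(M - \lambda I)$ or $\det(\lambda I - M)$), but since the two differ only by a factor of $(-1)^2 = 1$ for a $2$-by-$2$ matrix, the roots and the resulting formula are unaffected. The proof occupies only a couple of lines.
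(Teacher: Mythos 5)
Your proof is correct and follows exactly the same approach as the paper, which simply notes that these are the roots of the characteristic polynomial of $M$. You have merely spelled out the characteristic polynomial and the quadratic formula in a bit more detail.
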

 
\begin{proof}
  These are the roots of the characteristic polynomial of $M$.
\end{proof}

\begin{lemma} \label{sigma_dot:lem}
  Let $\Delta$ be a $2$-by-$2$ matrix with singular values $\sigma_+>\sigma_->0$. Define $t=trace(\Delta\Delta')=\norm{\Delta}_F^2$, $d=det(\Delta)$ and
$r^2=t^2-4d^2$. Assume that $\Delta$ depends on a parameter $\eta$ and let
$\dot{\sigma}_\pm$, $\dot{t}$ and $\dot{d}$ denote the derivative of these
quantities w.r.t the parameter $\eta$. Then
\[
  r(\dot{\sigma}_++\dot{\sigma}_-)(\dot{\sigma}_+-\dot{\sigma}_-) = 
  2(\dot{t}+2\dot{d})(\dot{t}-2\dot{d})\,.
\]
\end{lemma}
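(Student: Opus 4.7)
The plan is to express both sides of the identity in terms of the invariants of $M := \Delta\Delta'$ and then differentiate. First I would apply Lemma \ref{M:lem} to the $2\times 2$ matrix $M$, whose trace equals $\|\Delta\|_F^2 = t$ and whose determinant equals $\det(\Delta)^2 = d^2$. Since the eigenvalues of $M$ are $\sigma_+^2$ and $\sigma_-^2$, the lemma yields
$$\sigma_\pm^2 \;=\; \tfrac{1}{2}\bigl(t \pm \sqrt{t^2-4d^2}\bigr) \;=\; \tfrac{1}{2}(t \pm r),$$
so in particular $\sigma_+^2+\sigma_-^2 = t$, $\sigma_+^2-\sigma_-^2 = r$, and $\sigma_+\sigma_- = |d|$.

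The key step is to take ``square-root symmetric functions'' by observing
$$(\sigma_+\pm\sigma_-)^2 \;=\; (\sigma_+^2+\sigma_-^2)\pm 2\sigma_+\sigma_- \;=\; t \pm 2|d|.$$
I would differentiate each of these two identities with respect to $\eta$, obtaining
$$2(\sigma_++\sigma_-)(\dot\sigma_++\dot\sigma_-) \;=\; \dot t + 2\,\mathrm{sgn}(d)\,\dot d, \qquad 2(\sigma_+-\sigma_-)(\dot\sigma_+-\dot\sigma_-) \;=\; \dot t - 2\,\mathrm{sgn}(d)\,\dot d,$$
and then multiply the two. On the left, $(\sigma_++\sigma_-)(\sigma_+-\sigma_-) = \sigma_+^2-\sigma_-^2 = r$; on the right, the product expands as a difference of squares to $\dot t^2 - 4\dot d^2 = (\dot t + 2\dot d)(\dot t - 2\dot d)$, which depends on $d$ only through $\dot d^2$. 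Rearranging produces exactly the desired relation among $r$, $\dot\sigma_\pm$, $\dot t$, and $\dot d$, modulo the absolute numerical constant.

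The only point that warrants some care is the appearance of $|d|$ rather than $d$ in the intermediate step, which introduces a $\mathrm{sgn}(d)$ factor in each differentiated equation. Since these signs multiply to $+1$ on the left-hand side of the final product and the right-hand side is manifestly symmetric under $\dot d\mapsto -\dot d$, the sign ambiguity cancels. Beyond this bookkeeping the proof is pure polynomial differentiation; there is no genuine obstacle.
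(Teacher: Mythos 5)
Your proof is correct and takes a somewhat cleaner route than the paper's. The paper differentiates the pair $2\sigma_\pm^2 = t \pm r$ obtained from Lemma \ref{M:lem} applied to $\Delta\Delta'$, which brings $\dot r$ into the computation and forces a further substitution via $r\dot r = t\dot t - 4d\dot d$ before the terms collapse. You sidestep $\dot r$ entirely by repackaging the same data as the two symmetric-function identities $(\sigma_+ \pm \sigma_-)^2 = t \pm 2|d|$, whose $\eta$-derivatives immediately produce combinations of $\dot t$ and $\dot d$ alone; multiplying the two and using $(\sigma_++\sigma_-)(\sigma_+-\sigma_-) = \sigma_+^2 - \sigma_-^2 = r$ finishes in one line. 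Your attention to the $|d|$ versus $d$ issue and the $\mathrm{sgn}(d)$ factors, which the paper does not address and which cancel harmlessly in the product, tightens a point the paper glosses over. Your hedge ``modulo the absolute numerical constant'' is well taken and in fact flags a real slip: the computation you describe yields $4r(\dot\sigma_++\dot\sigma_-)(\dot\sigma_+-\dot\sigma_-) = (\dot t + 2\dot d)(\dot t - 2\dot d)$, which differs by a fixed factor from the prefactor $2$ asserted in the lemma (one can confirm this with $\Delta = \mathrm{diag}(a,b)$, and the paper's intermediate display \eqref{sigmapm_dot:eq} also carries a slipped constant). Since the lemma is invoked only to locate the zeros of $\partial(\sigma_++\sigma_-)/\partial\eta$, the constant is immaterial to every downstream use, but the discrepancy you noticed is genuine.
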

\begin{proof}

  By Lemma \ref{M:lem} we have 
  $
    2\sigma_\pm^2=t\pm r
  $ and therefore
  \[
    \sqrt{2}\dot{\sigma}_\pm = \frac{\dot{t}\pm\dot{r}}{2\sqrt{t\pm r}}\,.
  \]
  Differentiating and expanding $\dot{\sigma}_+ \pm \dot{\sigma}_-$ we obtain the
  relation
  \begin{eqnarray}    \label{sigmapm_dot:eq}
    \left( \dot{\sigma}_+ + \dot{\sigma}_- \right) = 
    \frac{(8d/r)
      (\dot{t}+2\dot{d})
      (\dot{t}-2\dot{d})}{(t^2-r^2)(\dot{\sigma}_+-\dot{\sigma}_-)}
    \end{eqnarray}
    and the result follows.
\end{proof}

\begin{lemma} \label{Delta:lem}
  Let $\eta,c,\ct\geq0$ and set $s=\sqrt{1-c^2}$ and $\stt=\sqrt{1-\ct^2}$.
Define
  \[
  \Delta = \Delta(\eta,c,\ct,s,\stt) = \begin{bmatrix}  \eta c\ct -x  & \eta c\stt  \\
  \eta \ct s & \eta s\stt \,.
\end{bmatrix}
\]
Then
\begin{eqnarray}
  \norm{\Delta}_F^2 &=& \eta^2 + x^2 -2x\eta c \ct \label{tr:eq}\\
  det(\Delta) &=&  -x\eta s\stt\,, \label{det:eq}
\end{eqnarray}
  and the singular values $\sigma_+>\sigma_-$ of $\Delta$ are given by 
  \begin{eqnarray}
    \sigma_\pm = \tfrac{1}{\sqrt{2}}\sqrt{\norm{\Delta}_F^2 \pm \sqrt{\norm{\Delta}_F^4 -
  4det(\Delta)^2}}\,. \label{sigmapm:eq}
  \end{eqnarray}
\end{lemma}
\begin{proof}
  Apply Lemma \ref{M:lem} to the matrix $\Delta$.
\end{proof}

\subsection{Frobenius norm loss}

Theorem \ref{char:thm} allows us to rediscover the optimal shrinker for Frobenius
norm loss, which was derived from first principles in Section \ref{fro:sec}.
To this end, observe that  by \eqref{tr:eq} we have
\begin{eqnarray} \label{L22_fro:eq}
  L^{fro}_{2,2}\left( 
   \eta \begin{bmatrix}  c\ct  & c\stt  \\
  \ct s & s\stt 
\end{bmatrix} \,,\, 
\begin{bmatrix}  x  & 0  \\
  0 & 0
\end{bmatrix} 
  \right) = 
  \norm{\Delta}_F^2 
= \eta^2 + x^2 -2x\eta c \ct\,.
\end{eqnarray}
To find the optimal shrinker, we solve
$\partial\norm{\Delta}_F^2/\partial\eta=0$ for $\eta$ and use the fact that
$c^2\ct^2+c^2\stt^2+s^2\ct^2+s^2\stt^2=1$. We find that the minimizer of
$\norm{\Delta}_F^2$ is
$\eta^{**}(x) = x\,c\ct$. 
Defining $\eta^{**}(x)=x\,c(x)\ct(x)$ for $x\geq
\beta^{1/4}$,
we find that the asymptotic loss of $\eta^{**}(x)$ and of $\eta\equiv 0$
cross at $x_0=\beta^{1/4}$. 
Simplifying \eqref{concat:eq},
where $x(y)$ is given by \eqref{x_of_y:eq}, 
we find that $\eta^{*}(y)$ is given by \eqref{opt_shrinker_fro:eq}. 
By Theorem \ref{char:thm}, this is the
optimal shrinker.  (As mentioned in  Section \ref{fro:sec}, the optimal shrinker
is not itself strictly conservative, yet can be shown to have the asymptotic loss
predicted by our framework.)

\subsection{Operator norm loss}

By \eqref{sigmapm:eq},
\begin{eqnarray}
  L^{op}_{2,2}\left( 
   \eta \begin{bmatrix}  c\ct  & c\stt  \\
  \ct s & s\stt 
\end{bmatrix} \,,\, 
\begin{bmatrix}  x  & 0  \\
  0 & 0
\end{bmatrix} 
  \right) = 
  \norm{\Delta}_{op} = \sigma_+\,.
\end{eqnarray}
To find the optimal shrinker, we solve
$\partial\norm{\Delta}_{op}/\partial\eta=0$ for $\eta$ on $\beta^{1/4}\leq x$.
 We find that 
 the minimizer of $\norm{\Delta}_{op}$ is at $\eta^{**}(x)=x$. 
 By \eqref{sigmapm:eq}, the asymptotic loss of $\eta^{**}$ is given by
 $x\sqrt{1-c(x)\ct(x)+|c(x)-\ct(x)|}$, so that the asymptotic loss of
 $\eta^{**}(x)$ and of $\eta\equiv 0$ cross at $x_0=\beta^{1/4}$.  
 Simplifying \eqref{concat:eq}, 
we recover the shrinker $\eta^*(y)$ mentioned above in \eqref{opt_shrinker_op:eq}. 
Observe that although this shrinker is discontinuous at
$y(\beta^{1/4})=1+\sqrt{\beta}$, its asymptotic loss $L_\infty(\eta^*|\cdot)$
exists by Lemma \ref{char3:lem}, and, by Theorem
\ref{char:thm}, dominates the asymptotic loss any conservative shrinker. We note that
as in the Frobenius case, this optimal shrinker is not strictly conservative, as it
only satisfies $\eta(y)=0$ for $y\leq \beta_+$. Here too a delicate argument
beyond our present scope shows that the asymptotic loss exists and matches the
formula predicted by our framework \cite{Donohoa}.

\paragraph{Remark.} The optimal shrinker for operator norm loss $\eta^*(y) =
x(y)$ simply shrinks the data singular value back to the "original" location of
its corresponding signal singular value.

\subsection{Nuclear norm loss}

Again by \eqref{sigmapm:eq}
\begin{eqnarray}
  L^{nuc}_{2,2}\left( 
   \eta \begin{bmatrix}  c\ct  & c\stt  \\
  \ct s & s\stt 
\end{bmatrix} \,,\, 
\begin{bmatrix}  x  & 0  \\
  0 & 0
\end{bmatrix} 
  \right) = 
  \norm{\Delta}_* = \sigma_++\sigma_-\,. 
\end{eqnarray}

\noindent To find the optimal shrinker, assume first that $x$ is such that $c(x)\ct(x)\geq
s(x)\stt(x)$. 
By Lemma
\ref{sigma_dot:lem}, with
\[
  t = \eta^2+x^2-2x\eta c \ct \qquad \text{and} \qquad d = -x\eta s \stt\,,
\]
we find that only zero of $\partial(\sigma_++\sigma_-)/\partial \eta$ occurs when 
$\partial t/\partial \eta - \partial d/\partial \eta=0$, namely at
\[
  \eta^{**}(x) = x\Big(c(x)\ct(x) - s(x)\stt(x)\Big)\,.
\]
Direct calculation using \eqref{sigmapm:eq}, \eqref{det:eq} and \eqref{tr:eq} 
shows that the square of the asymptotic loss of $\eta^{**}$ is simply 
$x^2+(\eta^{**}(x))^2 -2x\eta^{**}(x)\Big(c(x)\ct(x)-s(x)\stt(x)\Big)$,
so that the asymptotic loss of $\eta^{**}(x)$ and of $\eta\equiv 0$ cross at 
the unique $x_0$ satisfying $c(x_0)\ct(x_0)=s(x_0)\stt(x_0)$.
Substituting $c=c(x)$ from \eqref{c:eq}, $\ct=\ct(x)$ from \eqref{ct:eq} and
also $s=s(x)=\sqrt{1-c(x)^2}$ and $\stt=\stt(x)=\sqrt{1-\ct(x)^2}$, we find 
\[
  \eta^*(y) = \left(\frac{x^4-\beta}{x^2y} + \frac{\sqrt{\beta}}{x}\right)_+\,,
\]
recovering the optimal shrinker \eqref{opt_shrinker_op:eq}. Inspection of 
\eqref{opt_shrinker_op:eq} reveals that this
optimal shrinker is in fact a conservative shrinker.

\section{Finding Optimal Shrinkers Numerically: Schatten norm losses}
\label{schatten:sec}

In Section \ref{froopnuc:sec} we have followed the recipe discussed in Section
\ref{discuss:subsec} analytically, and explicitly solved for the optimal shrinkers
of the Frobenius, Operator and Nuclear
norm losses. 
In some cases, the optimization problem \eqref{Fmin:eq} does not admit 
a closed-form solution, and in other cases, the closed-form solution is 
unreasonably
complicated. For such cases, we note that it is extremely easy to solve the problem  \eqref{Fmin:eq}
numerically, as it only involves minimization of a univariate function that depends on
the two eigenvalues of a $2$-by-$2$ matrix. 
To demonstrate that our recipe for finding optimal shrinkers can
be easily executed numerically, rather than analytically, 
in this section we find the optimal shrinker for
any Schatten-$p$ norm loss numerically\footnote{We thank the anonymous referee for
 this helpful suggestion.}, for any value $p>0$. 

Let $0< p \leq \infty$. Recall that the Schatten-$p$ norm (resp. quasi-norm if
$0<p<1$) of a matrix is the
$\ell_p$ norm (resp. quasi-norm) of its singular values vector. If the singular values of the
$m$-by-$n$
matrix $X-\hat{X}$ are $\sigma_1,\ldots,\sigma_m$, define the Schatten-$p$ 
loss 
\begin{eqnarray*}
  L_{m,n}^{S_p}(X,\hat{X}) &=&  \sqrt[p]{\sum_{i=1}^m \sigma_i^p} \qquad\qquad
  0<p<\infty\\
  L_{m,n}^{S_p}(X,\hat{X}) &=& \max\left\{ \sigma_1,\ldots,\sigma_m \right\} \qquad p=\infty\,,
\end{eqnarray*}
where the matrix size $m,n$ has been suppressed in the notation for simplicity.

Schatten-$p$ norms and quasi-norms have been considered in the literature for matrix estimation:
see \cite{Rohde2011,Koltchinskii2011a} and references therein. (The case $0<p\leq
1$  is of special interest in matrix completion problems due to its low-rank inducing
behavior.)
So far in this paper we have carefully studied three special cases: $L_{m,n}^{fro}\equiv
L_{m,n}^{S_2}$,
 $L_{m,n}^{nuc}\equiv L_{m,n}^{S_1}$ and
 $L_{m,n}^{op}\equiv L_{m,n}^{S_\infty}$.
 Observe that for any $0< p < \infty$, the Schatten-$p$ loss is
 orthogonally invariant and sum-decomposable, hence amenable to the our
 analysis.

 While it is in principle possible to derive the optimal shrinker for the
 Schatten-$p$ loss analytically using Lemma \ref{sigma_dot:lem} and Lemma
\ref{Delta:lem}, the result would be a very complicated expression. Instead, we
follow the recipe of Section \ref{discuss:subsec} numerically:
We select points of interest $\left\{ y_i \right\}$ in which we would like to
evaluate the optimal shrinker $\eta^*(y)$. 
We define  $x_i=x(y_i)$ where $y\mapsto x(y)$ is the transformation from
    \eqref{x_of_y:eq}.
    For each of the values $\{x_i\}$ we form a symbolic expression for the function 
 $F(\eta,x_i)$ from \eqref{F:eq}, and minimize it numerically to obtain 
 the minimizer $\eta^{**}(x_i)$ from \eqref{Fmin:eq}.
 The desired value of the optimal shrinker $\eta^*(y_i)$ is then given by
 $\eta^*(y_i) = \eta^{**}(x(y_i)) = \eta^{**}(x_i)$.

 Figure \ref{schatten1:fig} and Figure \ref{schatten2:fig}
 show the optimal shrinker discovered numerically for
 the Schatten-$p$ loss, for a few values of $p$. 
 Figure \ref{schatten1:fig} focuses on the case $p\geq 1$, where the
 Schatten-$p$ loss is given by a norm. 
 Note the familiar shapes for
 the values $p=1,2,10000$ (the latter is indistinguishable from the case
 $p=\infty$, namely the operator norm). It seems that the optimal shrinker for
 all cases $1\leq p<\infty$ are continuous, and that the discontinuity found
 analytically for the case $p=\infty$ forms only in the limit $p\to\infty$. 
 Figure \ref{schatten2:fig} focuses on the case $0<p<1$, where the Schatten-$p$
 loss is given by a quasi-norm. The numerical findings are fascinating and
 prompt further research: for instance, while the optimal shrinker for $p=1$ is
 continuous, at an unknown value $p<1$ the shrinkers become discontinuous,
 with a discontinuity resembling that of the $p=\infty$ case. Furthermore, for
 small values of $p$, the
 optimal shrinkers are very similar to the hard thresholding nonlinearities,
 with a ``hard threshold'' that depends on $p$ and on the aspect ratio $\beta$.
 In other words, in these cases, the optimal shrinker and the optimal hard
 thresholding
 nonlinearity seem to approximately coincide. It also seems that as $p\to 0$, the optimal
 shrinkers tend to the zero shrinker. All these phenomena can be studied and
 evaluated precisely in further research using the framework developed in this paper.

\begin{figure}[h!]
\centerline{
  \includegraphics[height=2.2in]{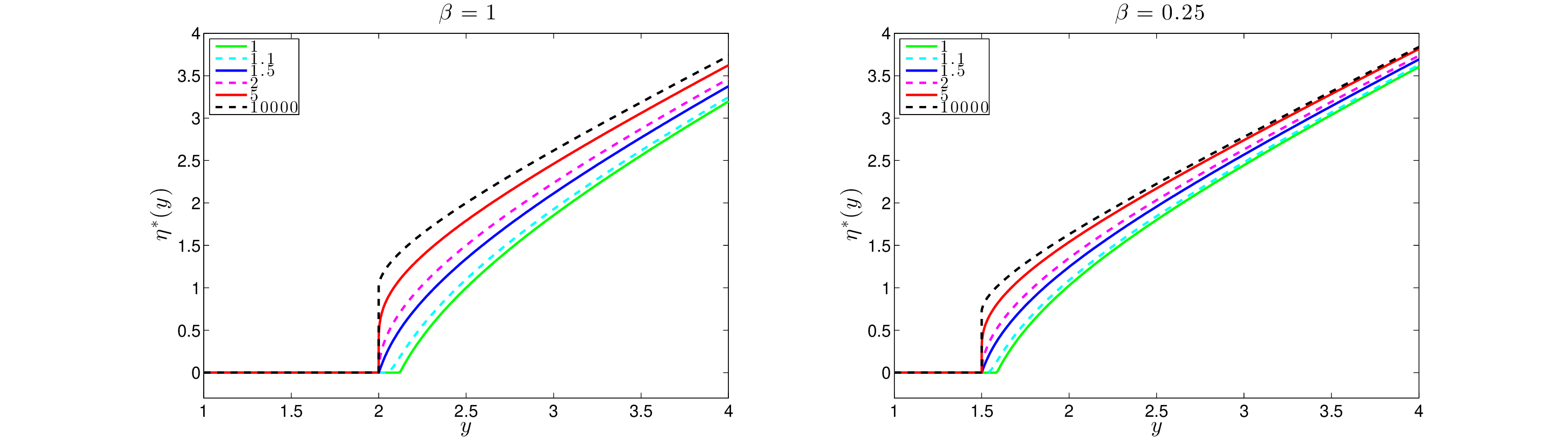}}
   \caption{\small 
     Numerically computed optimal shrinkers for the Schatten-$p$ loss, for
     different values of $p\geq 1$ shown in the legend. Left, $\beta=1$ (the case of
     square matrix); Right, $\beta=0.25$ (the case of four times as many columns
   as rows.) Each shrinker was evaluated on a grid of $200$ points.
 This figure can be reproduced
 using the code supplement \cite{Gavish2015}. }
   \label{schatten1:fig}
 \end{figure}

 \begin{figure}[h!]
\centerline{
  \includegraphics[height=2.2in]{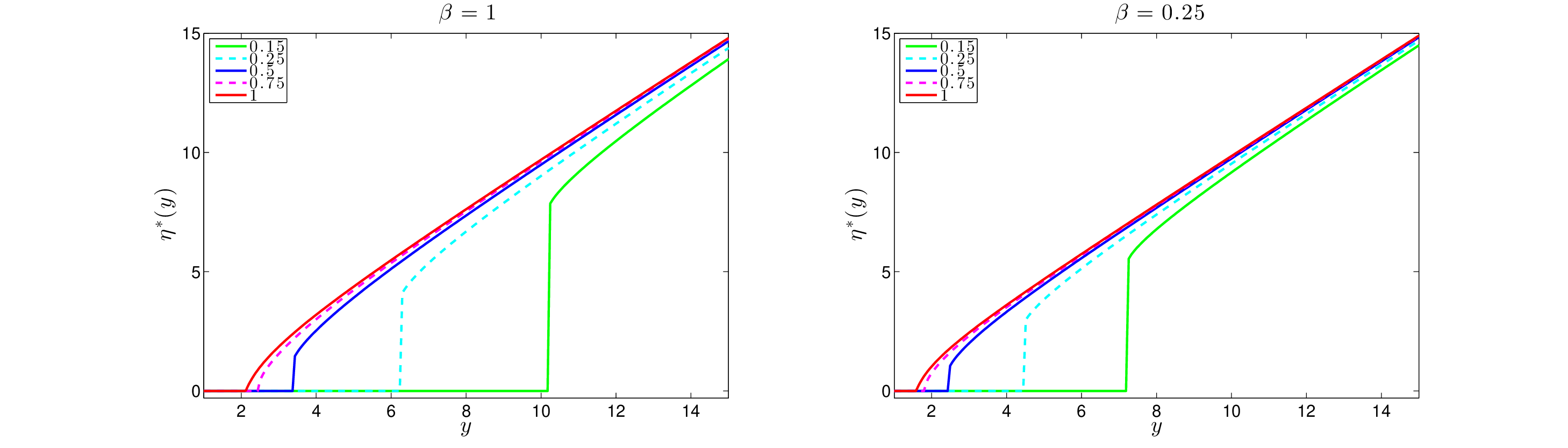}}
   \caption{\small 
     Numerically computed optimal shrinkers for the Schatten-$p$ loss, for
     different values of $0<p\leq 1$ shown in the legend. Left, $\beta=1$ (the case of
     square matrix); Right, $\beta=0.25$ (the case of four times as many columns
   as rows.) Each shrinker was evaluated on a grid of $200$ points.
 This figure can be reproduced
 using the code supplement \cite{Gavish2015}. }
   \label{schatten2:fig}
 \end{figure}

\section{Extensions}

Our main results have been formulated and calibrated specifically for
the model $Y=X+Z/\sqrt{n}\in\Mmn$, where the distribution of the noise matrix
$Z$ is orthogonally invariant.
In this section we extend our main results to include the model $Y=X+\sigma
Z\in\Mmn$, and consider:
\begin{enumerate}

  \item The setting where $\sigma$ is either known but does not necessarily
    equal $1/\sqrt{n}$, or is altogether unknown.

  \item The setting where the noise matrix $Z$ has i.i.d entries, but its
    distribution is not necessarily orthogonally invariant.

 \end{enumerate}

The results below follow \cite{Donoho2013b}.

\subsection{Unknown Noise level} \label{noise:sec}

Consider an asymptotic framework slightly more general than the one in Section
\ref{framework:subsec}, in which $Y_n = X_n + (\sigma/\sqrt{n})Z_n$, with $X_n$
and $Z_n$ as defined there. In this section we keep the loss family $L$ and the
asymptotic aspect ratio $\beta$ fixed and implicit. We extend Definition \ref{asyloss:def}
 and write
 \[
     L_\infty(\eta|\V{x},\sigma) \aseq \lim_{n\to\infty} L_{m_n,n}\left(X_n\,,\,\hat{X}_\eta(X_n +
     \tfrac{\sigma}{\sqrt{n}}Z_n)\right)\,.
   \]

When the noise level $\sigma$ is known, Eq.
\eqref{scale-xhat:eq} allows us to re-calibrate any nonlinearity
$\eta$,
originally calibrated for noise level $1/\sqrt{n}$, to a different noise level.
For a nonlinearity $\eta:[0,\infty)\to[0,\infty)$, write
  \[
    \eta_c(y) = c\cdot \eta(y/c)\,.
  \]
  We clearly have: 

  \begin{lemma}
If $\eta^*$ is an optimal shrinker for $Y_n=X_n+Z_n/\sqrt{n}$,
  namely,
\[
  L_\infty(\eta^*|\V{x}) \leq L_\infty(\eta|\V{x})
\]
for any $r\geq 1$, any $\V{x}\in\R^r$ and any continuous nonlinearity $\eta$, and $\sigma>0$, then $\eta^*_\sigma$ is an
optimal shrinker for $Y_n=X_n+(\sigma/\sqrt{n})Z_n$, namely
\[
  L_\infty(\eta^*_\sigma|\V{x},\sigma) \leq L_\infty(\eta|\V{x},\sigma)
\]
for any $r\geq 1$, any $\V{x}\in\R^r$ and any continuous nonlinearity $\eta$.
\end{lemma}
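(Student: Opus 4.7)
The plan is to reduce everything to the standard noise-level-$1/\sqrt{n}$ framework by rescaling the observation. First I would record the elementary algebraic identity
\[
  \hat{X}_{\eta_\sigma}(Y) \;=\; \sigma\cdot\hat{X}_\eta(Y/\sigma),
\]
which is immediate from the definition \eqref{shrink:eq}: the SVD of $Y/\sigma$ shares the singular vectors of $Y$ and has singular values $y_i/\sigma$, so applying $\eta_\sigma$ to the $y_i$ returns $\sigma\,\eta(y_i/\sigma)$, exactly $\sigma$ times what $\eta$ returns on the singular values of $Y/\sigma$. Plugging the model $Y_n=X_n+(\sigma/\sqrt{n})Z_n$ into this identity,
\[
  \hat{X}_{\eta_\sigma}(Y_n)\;=\;\sigma\cdot\hat{X}_\eta\!\big(X_n/\sigma + Z_n/\sqrt{n}\big),
\]
and the input $X_n/\sigma+Z_n/\sqrt{n}$ is an instance of the standard model whose signal matrix has singular values $\V{x}/\sigma$.

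The next step is to lift this estimator identity to the loss. For each of the loss families treated in this paper, $L_{m,n}$ is positively homogeneous of some degree $p>0$: $L_{m,n}(\sigma A,\sigma B)=\sigma^p L_{m,n}(A,B)$ (with $p=2$ for Frobenius and $p=1$ for operator and nuclear). Writing $X_n=\sigma\cdot(X_n/\sigma)$ and combining with the estimator identity,
\[
  L_{m_n,n}\!\left(X_n,\hat{X}_{\eta_\sigma}(Y_n)\right) \;=\; \sigma^p\, L_{m_n,n}\!\left(X_n/\sigma,\hat{X}_\eta(X_n/\sigma+Z_n/\sqrt{n})\right).
\]
Passing to the almost-sure limit, the right-hand side converges to $\sigma^p L_\infty(\eta|\V{x}/\sigma)$ by Definition \ref{asyloss:def}, yielding the scaling formula
\[
  L_\infty(\eta_\sigma|\V{x},\sigma)\;=\;\sigma^p\, L_\infty(\eta|\V{x}/\sigma).
\]

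To finish, given any continuous shrinker $\eta$, define $\tilde{\eta}(y):=\eta(\sigma y)/\sigma$, so that $\tilde{\eta}_\sigma=\eta$. Applying the scaling formula to both $\eta=\tilde{\eta}_\sigma$ and $\eta^*_\sigma$, and invoking optimality of $\eta^*$ at the rescaled signal $\V{x}/\sigma$,
\[
  L_\infty(\eta|\V{x},\sigma)\;=\;\sigma^p L_\infty(\tilde{\eta}|\V{x}/\sigma)\;\geq\;\sigma^p L_\infty(\eta^*|\V{x}/\sigma)\;=\;L_\infty(\eta^*_\sigma|\V{x},\sigma),
\]
which is the claim. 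The one point that deserves attention is that positive homogeneity of the loss is not listed among the abstract hypotheses (orthogonal invariance, regularity, sum- or max-decomposability) imposed earlier on $L$; it holds trivially for the three explicit losses $L^{fro},L^{op},L^{nuc}$ and for any loss built from singular values of $A-B$, so I would either add it as a standing assumption for this section or restrict the statement to those loss families explicitly.
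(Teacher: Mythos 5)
Your proof is correct and supplies exactly the rescaling argument that the paper treats as immediate (the lemma is preceded only by ``We clearly have:'' with no proof given). The chain of identities
\[
  \hat{X}_{\eta_\sigma}(Y_n)=\sigma\,\hat{X}_\eta(Y_n/\sigma),\qquad Y_n/\sigma = X_n/\sigma + Z_n/\sqrt{n},
\]
followed by homogeneity of the loss and passage to the a.s.\ limit, and then the bijection $\eta\mapsto\tilde\eta$ with $\tilde\eta_\sigma=\eta$, is the natural and intended argument, and your bookkeeping is accurate (the map $\V{x}\mapsto\V{x}/\sigma$ is a bijection of $\R^r$ and $\tilde\eta$ is continuous whenever $\eta$ is, so the quantifiers carry over).

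Your closing caveat is well taken: the general hypotheses the paper places on $L$ (orthogonal invariance, regularity, decomposability) do not by themselves imply the positive homogeneity $L_{m,n}(\sigma A,\sigma B)=\sigma^p L_{m,n}(A,B)$ on which the scaling identity rests. This assumption is used silently throughout the paper's noise-calibration discussion --- it is already implicit in Eq.~\eqref{scale-xhat:eq}, which only ``undoes'' the rescaling correctly when $L$ scales homogeneously --- and it does hold for every loss the paper actually considers (any Schatten norm of $A-B$, raised to a power). Making it an explicit standing hypothesis of Section~\ref{noise:sec}, as you suggest, would indeed be cleaner.
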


When the noise level $\sigma$ is unknown, we are required to estimate it. See
\cite{Kritchman2009,Owen2009,Perry2009,Shabalin2013,Nadakuditi,Josse2016} 
and references therein for existing literature
on this estimation problem. The method below has been proposed in 
\cite{Donoho2013b}.

Consider the following robust estimator for the
parameter $\sigma$ in the model $Y=X+\sigma Z$:
\begin{eqnarray}
  \hat{\sigma}(Y) = \frac{y_{med}}{\sqrt{n \cdot\mu_\beta}}\,,
\end{eqnarray}
where $y_{med}$ is a median singular value of $Y$ and $\mu_\beta$ is the
median of the Marcenko-Pastur distribution, namely, the unique solution 
in $\beta_-\leq x\leq \beta_+$
to the equation
\begin{eqnarray*}
  \intop_{\beta_-}^x \frac{\sqrt{(\beta_+-t)(t-\beta_-)}}{2\pi t}dt =
  \frac{1}{2}\,,
\end{eqnarray*}
where $\beta_\pm = 1\pm\sqrt{\beta}$.
Note that the median $\mu_\beta$ is not available analytically but can easily be obtained
by numerical quadrature.

\begin{lemma} \label{sigmahat:lem}
  Let $\sigma>0$.
  For the sequence $Y_n=X_n+(\sigma/\sqrt{n})Z_n$ in our asymptotic framework, 
  \[\lim_{n\to\infty} \frac{\hat{\sigma}(Y_n)}{1/\sqrt{n}}\aseq  \sigma .\]
\end{lemma}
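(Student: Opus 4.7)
The plan is to reduce the statement to a claim about the pure-noise matrix $(\sigma/\sqrt{n})Z_n$ via a low-rank perturbation argument, and then invoke the Marchenko--Pastur limit together with standard quantile convergence.

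First I would compare $Y_n$ to the noise-only matrix. Let $\tilde{y}_{n,1}\geq \ldots \geq \tilde{y}_{n,m_n}$ denote the ordered singular values of $(\sigma/\sqrt{n})Z_n$, and let $y_{n,1}\geq \ldots \geq y_{n,m_n}$ denote those of $Y_n$. Since $X_n$ has fixed rank $r$, Weyl's inequality for singular values (applied coordinatewise, or equivalently via the Courant--Fischer min-max principle, after restricting to a subspace of codimension $r$ on which $X_n$ vanishes) gives
\[
  \tilde{y}_{n,k+r}\leq y_{n,k}\leq \tilde{y}_{n,k-r}
\]
for every index $k$ with $r < k \leq m_n - r$. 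Thus the bulk of the singular values of $Y_n$ and those of $(\sigma/\sqrt{n})Z_n$ match up to a shift by $r$ in rank, and this shift is asymptotically negligible when $r$ is fixed and $m_n\to\infty$.

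Next I would invoke the Marchenko--Pastur theorem (cited as \cite{Anderson2010} in the paper) in the form already used in Section 3: the empirical distribution of $\{\tilde{y}_{n,i}/\sigma\}_{i=1}^{m_n}$ converges almost surely, weakly, to the generalized quarter-circle distribution supported on $[\beta_-,\beta_+]$. Because this limit is absolutely continuous with a continuous, strictly positive density in the interior of its support, weak a.s.\ convergence of the empirical distribution upgrades to a.s.\ convergence of every fixed quantile $q\in(0,1)$; this is the standard fact that $F_n\Rightarrow F$ together with $F$ strictly increasing at $F^{-1}(q)$ implies $F_n^{-1}(q)\to F^{-1}(q)$. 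Applied at $q=1/2$, this identifies the a.s.\ limit of the (properly indexed) median singular value of $(\sigma/\sqrt{n})Z_n$ with $\sigma$ times the median of the limiting quarter-circle, which by the definition of $\mu_\beta$ in the statement is exactly $\sigma\sqrt{\mu_\beta}$ (absorbing whatever convention is used for the Marchenko--Pastur parameterization).

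Finally I would combine the two ingredients. Let $k_n$ be the index of $y_{med}(Y_n)$; then $k_n/m_n\to 1/2$, so $(k_n\pm r)/m_n\to 1/2$ as well and the interlacing inequality together with the quantile convergence above yields
\[
  y_{med}(Y_n)\aslim \sigma\sqrt{\mu_\beta}.
\]
Dividing by $\sqrt{n\mu_\beta}\cdot(1/\sqrt{n})^{-1}=\sqrt{\mu_\beta}$ gives the claim $\hat\sigma(Y_n)/(1/\sqrt{n})\to\sigma$. The only step requiring genuine attention is the upgrade from weak a.s.\ convergence of empirical distributions to a.s.\ convergence of the median, which rests on the regularity of the quarter-circle density at $\mu_\beta$; every other step is essentially bookkeeping. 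The rank-$r$ perturbation step is trivial once Weyl is invoked, and no probabilistic work beyond the already-cited Marchenko--Pastur result is needed.
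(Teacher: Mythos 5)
The paper states this lemma without proof, deferring to \cite{Donoho2013b}, so there is no in-paper argument to compare against. Your proposal is correct and uses the expected ingredients: rank-$r$ interlacing for singular values under the additive perturbation $X_n$, almost-sure weak convergence of the empirical singular-value distribution of $Z_n/\sqrt{n}$ to the generalized quarter-circle law, and the standard upgrade from weak convergence of distribution functions to almost-sure convergence of a fixed quantile (valid here because the limiting density is continuous and strictly positive in the interior of its support). Together these give $y_{med}(Y_n)\aslim \sigma\sqrt{\mu_\beta}$, and the lemma follows immediately from the definition of $\hat\sigma$. Your hedge about the Marchenko--Pastur parameterization is well placed: for the bookkeeping to close, $\mu_\beta$ must be read as the median of the \emph{eigenvalue} (squared-singular-value) Marchenko--Pastur distribution, so that $\sqrt{\mu_\beta}$ is the median singular value of $Z_n/\sqrt{n}$. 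The integral displayed just above the lemma uses $\beta_\pm = 1\pm\sqrt{\beta}$ as integration limits and omits a factor of $\beta$ in the denominator, apparently carried over from \cite{Donoho2013b} where the symbol $\beta_\pm$ denotes $(1\pm\sqrt{\beta})^2$; this is a notational slip in the present paper rather than a gap in your argument.
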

\begin{cor}
Let $Y_n=X_n+(\sigma/\sqrt{n})Z_n$ be a sequence in our asymptotic framework and
let $\eta^*$ be an optimal shrinker calibrated for $Y_n=X_n+Z_n/\sqrt{n}$.
Then the random sequence of shrinkers $\eta^*_{\hat{\sigma}(Y_n)}$ 
converges to the optimal shrinker $\eta^*_\sigma$:
\[ 
  \lim_{n\to\infty} \eta_{\hat{\sigma}(Y_n)}(y) \aseq \eta_\sigma(y)\,,\qquad
  y>0 \,.
\]
Consequently,  $\eta^*_{\hat{\sigma}(Y_n)}$ asymptotically achieves optimal
performance:
\[
  \lim_{n\to\infty} L_{m_n,n}\left(X_n\,,\,\hat{X}_{\eta^*_{\hat{\sigma}(Y_n)}}(X_n +
     \tfrac{\sigma}{\sqrt{n}}Z_n)\right) = 
   L_\infty(\eta^*_\sigma|\V{x},\sigma)\,.
\]
\end{cor}

In practice, for denoising a matrix $Y\in\Mmn$, assumed to satisfy $Y=X+\sigma
Z$, where $X$ is low-rank and $Z$ has i.i.d entries, we have the following 
approximately optimal
singular value shrinkage estimator:
\begin{eqnarray}
  \hat{X}(Y) = \sqrt{n}\sigma \hat{X}_{\eta^*}(Y/(\sqrt{n}\sigma))
\end{eqnarray}
when $\sigma$ is known, and 
\begin{eqnarray}
  \hat{X}(Y) = \sqrt{n}\hat{\sigma}(Y)\cdot
  \hat{X}_{\eta^*}(Y/(\sqrt{n}\hat{\sigma}(Y)))
\end{eqnarray}
when $\sigma$ is unknown.
Here, $\eta^*$ is an optimal shrinker with respect to
desired loss family $L$ in the natural scaling.

\subsection{General white noise} \label{general_noise:sec}

Our results were formally stated for the sequence of models of the form
$Y=X+\sigma Z$, where $X$ is a non-random matrix to be estimated, and the entries
of $Z$ are i.i.d samples from a distribution that is orthogonally invariant (in
the sense that the matrix $Z$ follows the same distribution as $A Z B$, for
any orthogonal $A\in M_{m,m}$ and $B\in M_{n,n}$). While Gaussian noise is orthogonally
invariant, many common distributions, which one could consider to model white
observation noise, are not. 

The singular values of a signal matrix $X$ constitute a very widely used measure of the
complexity, or information content, of $X$. In particular, they capture its rank. One attractive feature of the framework we adopt 
is that the loss $L_{m,n}(X,\hat{X})$ only depends
on the signal matrix $X$ through its 
nonzero singular values $\V{x}$. This allows the loss to be directly related to
the complexity of the signal $X$.
If the
distribution of $Z$ is not orthogonally invariant, the loss no longer enjoys  this
 property. This point is
 discussed extensively in  \cite{Shabalin2013}.

In general white noise, which is not necessarily orthogonally invariant, one can
still allow the loss to depend on $X$ only through its singular values by 
placing a prior distribution on $X$ and shifting to a model where it is a
random, instead of a fixed, matrix.  Specifically, 
consider an alternative asymptotic framework to the one in Section
\ref{framework:subsec}, in which the sequence denoising problems
$Y_n=X_n+Z_n/\sqrt{n}$ satisfies the following assumptions:
\begin{enumerate}
  \item {\em General white noise:} The entries of $Z_n$ are i.i.d
    samples from a
    distribution with zero mean, unit variance and finite fourth moment.  
\item {\em Fixed signal column span and uniformly distributed signal singular vectors:}
Let the rank $\rk>0$ be fixed and choose a vector $\V{x}\in\R^\rk$ with
coordinates $\V{x}=(x_1,\ldots,x_\rk)$. Assume that for all $n$, 
\begin{eqnarray} \label{singvec2:eq}
X_n = U_n \, diag(x_1,\ldots,x_r,0,\ldots,0) \, V_n'\,
\end{eqnarray}
is a singular value decomposition of $X_n$, where $U_n$ and $V_n$ are 
uniformly distributed random orthogonal matrices. Formally, $U_n$ and $V_n$ are
sampled from the Haar distribution on the
$m$-by-$m$ and $n$-by-$n$ orthogonal group, respectively.

  \item {\em Asymptotic aspect ratio $\beta$:}
    The sequence $\m_\n$ is such that $\m_\n / \n \to \beta$. 
 \end{enumerate}

 The second assumption above implies that $X_n$ is a ``generic'' choice of matrix
 with nonzero singular values $\V{x}$, or equivalently, a generic choice of
 coordinate systems in which the linear operator corresponding to $X$ is
 expressed.

The results of \cite{Benaych-Georges2012}, which we have used, hold in this case
as well. It follows that Lemma \ref{y-asy:lem} and Lemma \ref{inner-asy:lem}, and
consequently all our main results, hold under this alternative framework. In
short, in general white noise, all our results hold if one is willing to only
specify the signal singular values, rather than the signal matrix, and consider
a ``generic'' signal matrix with these singular values.

\section{Simulation}

Our results are exact only in the limit as the matrix size grows
to infinity. 
To study the accuracy of the asymptotic loss on finite matrices, and to compare
the optimal shrinker with optimally tuned hard and soft thresholding, we conducted
two simulation studies. 

\paragraph{Comparing the asymptotic loss with the empirical loss.} We studied $n$-by-$n$ matrices of the form $Y=X+Z$. The
signal matrix had exactly $r$ identical nonzero singular values. For brevity, we
focused on the asymptotic Frobenius loss.
Figure \ref{fig:fig7_a} compares the case $(n,r)=(20,1)$ with the case $(n,r)=(100,1)$. 
Figure \ref{fig:fig7_b} compares the case $(n,r)=(50,2)$ with the case $(n,r)=(50,4)$.
In each case we show three different noise distributions: the entries of the 
noise matrix $Z$ are i.i.d draws from a Gaussian distribution (thin tails),
uniform distribution (no tails) and Student-t with 6 degrees of freedom (fat
tails). We overlay the predicted asymptotic loss from Eq. \eqref{L_infty_sum:eq} 
and the observed loss for
different values of the signal singular value $x$. The observed loss was obtained by
averaging 50 Monte Carlo iterations. The shrinkers shown are the optimal
shrinker for Frobenius loss from Eq. \eqref{opt_shrinker_fro:eq}, and the
optimally tuned hard and soft thresholds as described in Section \ref{hard_and_soft:subsec}. 
Simulations show qualitatively that our results are useful already for
relatively small matrices, and that the low-rank assumption remains valid when
$r/m\leq0.1$, say.

\paragraph{Comparing optimal shrinkers with a brute-force calculation of the
optimal shrinkage.}

We studied $20$-by-$20$ matrices of the form $Y=X+Z$. The signal matrix was
rank-$1$ and the noise matrix was i.i.d Gaussian. For each of the three losses 
$\{$ Frobenius, nuclear, operator $\}$, we calculated the optimal
shrinkers using brute-force by scanning over a grid of possible values $\eta$
and finding the value that minimized the empirical loss as calculated by
averaging over $10$ monte carlo draws. Figure \ref{empirical_opt:fig} overlays 
the shrinkage calculated by brute-force over the asymptotically optimal
shrinkers calculated for the three losses in Section \ref{froopnuc:sec}.
Note the agreement with the asymptotic formulae already for $n=20$ and rank
fraction of $1/20=0.05$.

\begin{figure}[h]
  \centering
  \includegraphics[width=6.5in, trim= 140 1 3 .5, clip=true]{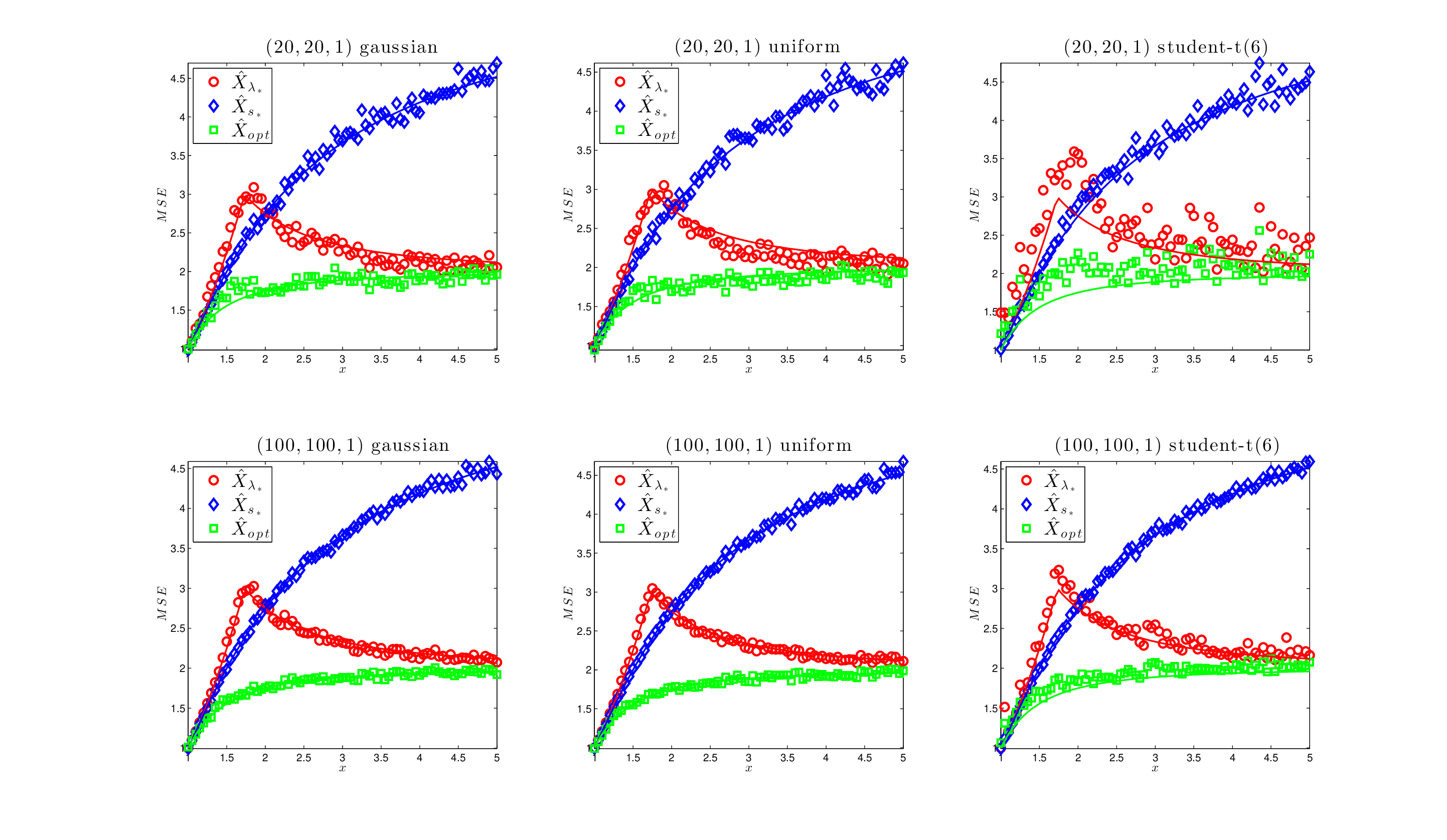}
  \caption{Comparison of asymptotic (solid line) 
    and empirically observed (dots) Frobenius loss for
  $n=20,100$ and $r=1$. Horizontal axis is the singular value of the signal
  matrix $X$. Shown are optimally tuned soft threshold $\hat{X}_{\lambda_*}$,
  optimally tuned hard threshold $\hat{X}_{s_*}$ and optimal shrinker
  $\hat{X}_{opt}$ from  \eqref{opt_shrinker_fro:eq}.
   This figure can be reproduced
 using the code supplement \cite{Gavish2015}.}
  \label{fig:fig7_a}
\end{figure}
\begin{figure}[h]
  \centering
  \includegraphics[width=6.5in,trim= 140 1 3 .5, clip=true]{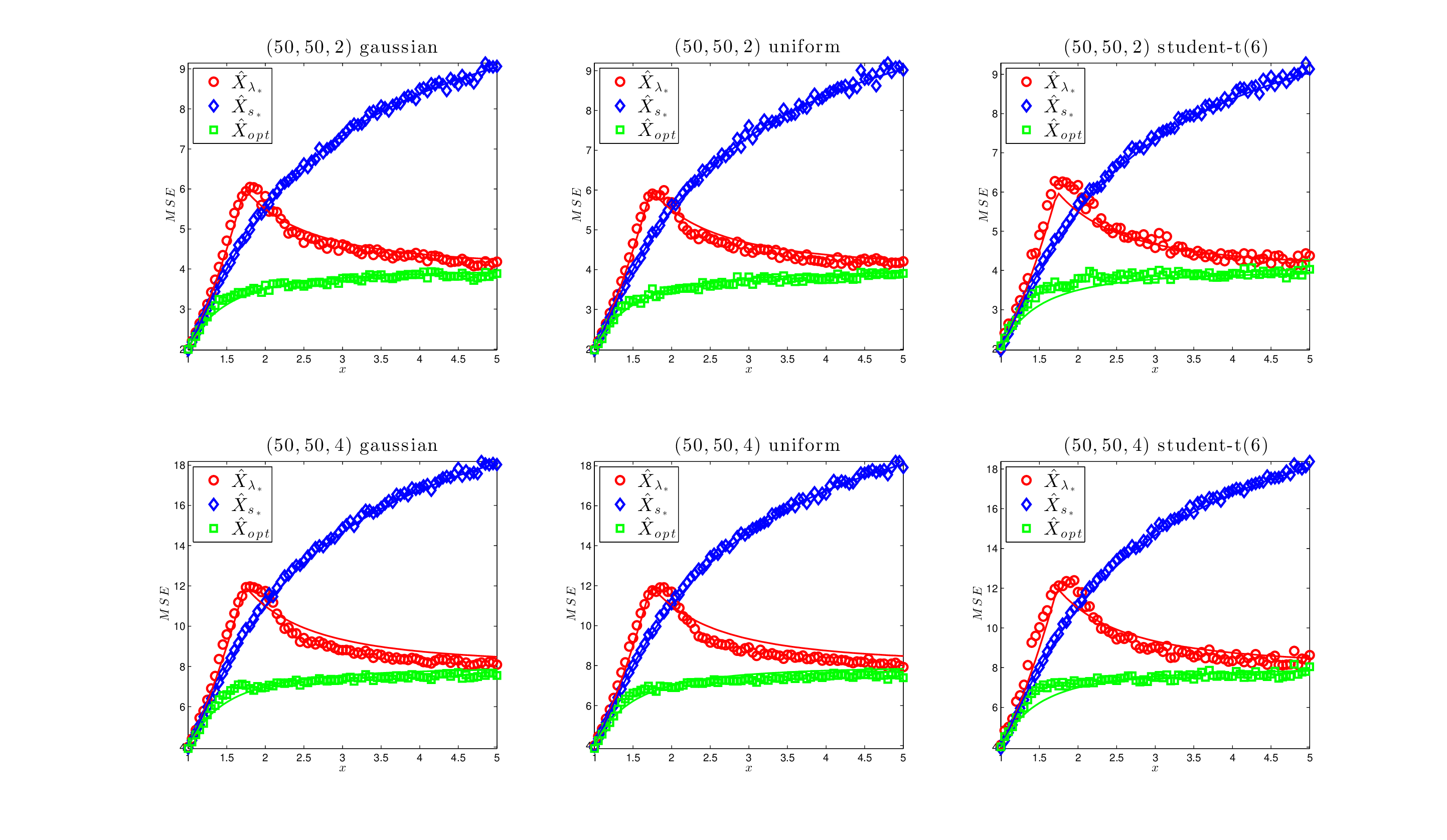}
  \caption{Comparison of asymptotic (solid line) 
    and empirically observed (dots) Frobenius loss for
  $n=50$ and $r=2,4$. 
Horizontal axis is the singular value of the signal
  matrix $X$. Shown are optimally tuned soft threshold $\hat{X}_{\lambda_*}$,
  optimally tuned hard threshold $\hat{X}_{s_*}$ and optimal shrinker
  $\hat{X}_{opt}$ from  \eqref{opt_shrinker_fro:eq}. This figure can be reproduced
 using the code supplement \cite{Gavish2015}.}
  \label{fig:fig7_b}
\end{figure}

\begin{figure}[h!]
\centerline{
  \includegraphics[width=7.5in]{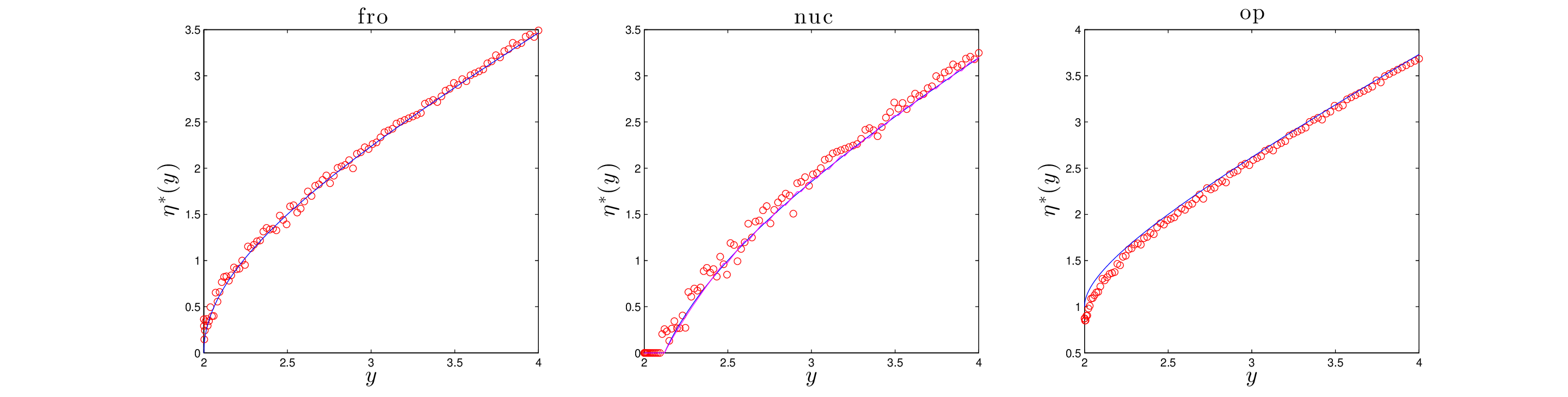}}
   \caption{\small 
     Optimal shrinkage computed by brute-force against the asymptotically
     optimal shrinkers from Section \ref{froopnuc:sec}, for $\beta=1$. This figure can be reproduced
 using the code supplement \cite{Gavish2015}.}
   \label{empirical_opt:fig}
 \end{figure}

\section{Conclusion}

We have presented a general framework for finding optimal shrinkers, either
analytically or numerically,  for a
variety of loss functions. 

Note that our general method, summarized in Theorem \ref{char:thm}, 
 is guaranteed to find a shrinker that is
 asymptotically unique admissible, or optimal, among {\em conservative} shrinkers (in
 the sense of Definition \ref{conservative:def}).
 This is an artifact of our proof method, and it is best to think of 
 Theorem \ref{char:thm} as a formal machine for finding ``good'' shrinkers, rather than a
 definite summary of their optimality properties. In fact, for all
 three loss functions considered in this paper, the optimal shrinkers we
 found dominate, in asymptotic loss, a much wider class of shrinkers. In
 particular, for all three losses, these optimal shrinkers 
 dominate the class of continuous shrinkers with the 
 property that $\eta(y)=0$ for all $y\leq \beta_+$, namely, shrinkers that
 truncate data singular values below the bulk edge $\beta_+$. In some sense,
 this is the class of ``reasonable'' shrinkers.
 
 The challenging issue is how to control the manner in which
 ``null'' singular values $y_{n,i}$ ($i>r$) affect the loss function. 
 When the noise distribution is Gaussian, 
 is possible to prove an analogy of Lemma \ref{residual:lem}, showing that the
 cumulative effect of these ``null'' singular values is negligible.
 To formally appeal to this fact, we are required to consider only loss functions
that enjoy a Lipschitz regularity property (on top of being decomposable and
 orthogonally invariant). 
  Then one can show that the optimal shrinkers characterized in 
  Theorem \ref{char:thm} dominate all ``reasonable'' shrinkers  as above. See
  \cite{Donohoa} for more details. 

Finally, we remark that closed-form solutions for the optimal shrinkers for Schatten-$p$ losses, 
and a generalization of our method to include Ky-Fan norms, both remain
interesting problems for further study.

\section*{Reproducible Research}

In the code supplement \cite{Gavish2015} we offer a Matlab software library that
includes:
\begin{enumerate}
  \item A function that calculates the optimal singular value shrinkage 
  w.r.t  
    the Frobenius, operator and nuclear norm losses, both in known or unknown noise
    level.
  \item Scripts that generate each of the figures in this paper.
  \item Notably, the script which generates Figure \ref{schatten1:fig} and
    Figure \ref{schatten2:fig} 
    includes an example of
    numerical evaluation of optimal shrinkers.
\end{enumerate}

\section*{Acknowledgements}

We thank Iain
Johnstone for helpful comments. We also thank Amit
Singer and Boaz Nadler for discussions stimulating this work, and Santiago
Velasco-Forero for pointing out an error in an earlier version of the
manuscript. We thank the anonymous referees for their helpful suggestions. 
This work was partially supported by NSF DMS-0906812 (ARRA).
MG was partially supported by a William R. and Sara Hart Kimball Stanford Graduate Fellowship.

\end{document}